\documentclass[12pt,a4paper]{article}

\addtolength{\textwidth}{4pt}

\usepackage{float}

\usepackage{tikz}

\usepackage{datetime}

\usepackage[all]{xy}

\usepackage{amssymb, amsmath, amsthm, eucal}

\usepackage{mathptmx}
\usepackage[scaled=.90]{helvet}
\usepackage{courier}

\parindent0pt 
\parskip10pt

\sloppy

\setcounter{secnumdepth}{2}

\lefthyphenmin=3
\righthyphenmin=3

\clubpenalty=10000
\widowpenalty=10000

%\hyphenation{mani-fold cat-egory cat-egories}

%%%

%\usepackage[
%disable=true, % uncomment for disabling
%color=orange!80, % default color
%color=gray!40,
%bordercolor=black,
%textwidth=.8in]
%{todonotes}
% the following is necessary for compatibility with amsart
%\makeatletter \providecommand\@dotsep{5}
%\makeatother
% for compatibility with amsart:
% use \amsartlistoftodos to make list of todos
%\newcommand{\amsartlistoftodos}{\makeatother \listoftodos\rmelax}

%\newcommand{\note}[1]{\todo[inline]{#1}}

%%%

%\usepackage[pdftex, hyperindex=true, backref=page]{hyperref}
%\renewcommand{\backref}[1]{\hspace*{0pt}\hfill\hbox{$\lhd$~#1}}

\usepackage[pdftex, hyperindex=true]{hyperref}

\hypersetup{
 colorlinks = true,
 linkcolor = black,
 urlcolor = blue,
 citecolor = black
}

%%%

\newcommand{\bbP}{\mathbb{P}}

\newcommand{\bbZ}{\mathbb{Z}}

\newcommand{\rmA}{\mathrm{A}}
\newcommand{\rmB}{\mathrm{B}}
\newcommand{\rmC}{\mathrm{C}}
\newcommand{\rmD}{\mathrm{D}}
\newcommand{\rmE}{\mathrm{E}}

\newcommand{\rmH}{\mathrm{H}}

\newcommand{\rmS}{\mathrm{S}}
\newcommand{\rmT}{\mathrm{T}}

\newcommand{\rmZ}{\mathrm{Z}}

\newcommand{\rmg}{\mathrm{g}}

\newcommand{\bfA}{\mathbf{A}}
\newcommand{\bfC}{\mathbf{C}}
\newcommand{\bfD}{\mathbf{D}}

\newcommand{\bfT}{\mathbf{T}}

\newcommand{\Mod}{\mathbf{Mod}}
\newcommand{\Per}{\mathbf{Per}}
\newcommand{\Dper}{\bfD^\text{\upshape per}}

\newcommand{\calT}{\mathcal{T}}
\newcommand{\calZ}{\mathcal{Z}}

\DeclareMathOperator{\CH}{CH}
\DeclareMathOperator{\HH}{HH}

\DeclareMathOperator{\id}{id}
\DeclareMathOperator{\op}{op}
\DeclareMathOperator{\gr}{gr}
\DeclareMathOperator{\dg}{dg}

\DeclareMathOperator{\Ho}{Ho}
\DeclareMathOperator{\Id}{Id}
\DeclareMathOperator{\Gr}{Gr}
\DeclareMathOperator{\Tot}{Tot}
\DeclareMathOperator{\Hom}{Hom}
\DeclareMathOperator{\Ext}{Ext}
\DeclareMathOperator{\Der}{Der}
\DeclareMathOperator{\Out}{Out}

%%%

\newtheorem{theorem}{Theorem}
\newtheorem{proposition}[theorem]{Proposition}
\newtheorem{corollary}[theorem]{Corollary}

\theoremstyle{definition}
\newtheorem{example}[theorem]{Example}

\newtheorem{remark}[theorem]{Remark}

\numberwithin{theorem}{section}
\numberwithin{equation}{section}
\numberwithin{figure}{section}

%%%

\title{Spectral sequences for Hochschild cohomology and graded centers of derived categories}

\author{Frank Neumann and Markus Szymik}

%\newdateformat{mydate}{\monthname~\twodigit{\THEYEAR}}
%\date{\mydate\today}

\date{March 2017}

\begin{document}

\maketitle

\renewcommand{\abstractname}{\vspace{-2\baselineskip}}

\begin{abstract}
\noindent 
The Hochschild cohomology of a differential graded algebra, or a differential graded category, admits a natural map to the graded center of its homology category:~the characteristic homomorphism. We interpret it as an edge homomorphism in a spectral sequence. This gives a conceptual explanation of the failure of the characteristic homomorphism to be injective or surjective, in general. To illustrate this, we discuss modules over the dual numbers, coherent sheaves over algebraic curves, as well as examples related to free loop spaces and string topology.

\vspace{\baselineskip}
\noindent MSC: 
16E40,	% (Co)homology of rings and algebras (e.g. Hochschild, cyclic, dihedral, etc.)
18G40	% Spectral sequences
(14F05)	% Sheaves, derived categories of sheaves and related constructions

\vspace{\baselineskip}
\noindent Keywords: Hochschild cohomology, center, spectral sequence, derived category
\end{abstract}

%%%

\section*{Introduction}

Differential graded categories seem to have been considered for the first time by Kelly~\cite{Kelly}, but it took a while before it was realized that these are much better behaved than the derived categories that are left over when the homological hatchet has done its work. Keller's ICM talk~\cite{Keller:ICM}, for instance, justifies this point of view.

Centers, and graded variants of this concept, have been investigated in various derived contexts, for instance for the derived categories of modules over~(commutative and non-commutative) algebras, derived categories of coherent sheaves in algebraic geo\-metry, as well as stable module categories in representation theory. See, for instance, Lowen--van den Bergh~\cite{Lowen+vandenBergh}, Avramov--Iyengar~\cite{Avramov+Iyengar}, Buchweitz--Flenner~\cite{Buchweitz+Flenner}, and Krause--Ye~\cite{Krause+Ye}. The non-linear and unstable situation in homotopy theory is addressed in~\cite{Dwyer+Szymik}. 

The purpose of this article is to shed light on the relationship between the centers of these derived categories on the one hand and the derived version of the center in the form of Hochschild cohomology on the other. It has already been known for some time that these can be related by the so-called characteristic homomorphism
\[
\HH_{\dg}^t(\bfA)\longrightarrow\rmZ^t_{\gr}(\rmH_\bullet\bfA)
\]
from the Hochschild cohomology to the center. See earlier work of Buchweitz--Flenner~\cite{Buchweitz+Flenner}, Lowen~\cite{Lowen}, Linckelmann~\cite[Sec.~2]{Linckelmann}, and Kuri\-ba\-ya\-shi~\cite[Sec.~5]{Kuribayashi}, for instance. Here we describe spectral sequences
\[
\HH_{\gr}^s(\rmH_\bullet\bfA\,;\,\Sigma^t\rmH_\bullet\bfA)\Longrightarrow\HH_{\dg}^{s+t}(\bfA)
\]
that have the characteristic homomorphisms as edge homomorphisms. See our Theorems~\ref{thm:main_ss} and~\ref{thm:identification} for precise statements. These spectral sequences can be used to get deeper insights into questions such as if a characteristic homomorphism is injective or surjective. A characteristic homomorphism is not injective if there is a term outside of the edge that survives the spectral sequence. A characteristic homomorphism is not surjective if there is a non-trivial differential involving classes on the edge; in that case one can say that the target of this differential is an obstruction to lifting the class from the graded center into Hochschild cohomology.

We will also describe spectral sequences of the form
\[
\rmH^p(\HH_{\gr}^q(\bfA))\Longrightarrow\HH_{\dg}^{p+q}(\bfA),
\]
see Theorem~\ref {thm:forgetful_ss} for the precise statement. Their edge homomorphisms take the form
\[
\rmH^p(\rmZ_{\gr}(\bfA))\longrightarrow\HH_{\dg}^p(\bfA).
\]
This is related to the homotopy limit problem in the sense of Thomason~\cite{Thomason} and Carlsson~\cite{Carlsson}, compare Remark~\ref{rem:hlp}.

Here is an outline of this article. In Section~\ref{sec:gdh} we reveal our conventions for gradings, differentials and homology, and in Section~\ref{sec:cat} we do so for the various sorts of categories that we will be referring to. In Section~\ref{sec:auto} we explain the relation between gradings and automorphisms; this allows us to give slightly more general definitions than those published so far. In Sections~\ref{sec:Z} and~\ref{sec:HM} we set up the centers and the Hochschild cohomology of these categories, respectively. In the main Section~\ref{sec:sss} we present and discuss the spectral sequences that interpolate between these invariants, and the final Sections~\ref{sec:alg},~\ref{sec:geo}, and~\ref{sec:top} contain examples from algebra, algebraic geometry, and algebraic topology, respectively, to illustrate the complexity of the situation.

%%%

\section{Gradings, differentials, and homology}\label{sec:gdh}

Throughout this text, we fix a ground field~$\mathfrak{K}$, and we will suppress it from the notation whenever convenient. For instance, if~$V$ and~$W$ are two vector spaces, then their tensor product~\hbox{$V\otimes W$} and their homomorphism space~$\Hom(V,W)$ will always be taken over the ground field. This makes the category of vector spaces a closed symmetric monoidal category with respect to the usual symmetry~$v\otimes w\mapsto w\otimes v$.

%%%

%\subsection{Gradings}

%\begin{definition}
A {\em graded vector space}~$V$ is just a family~$(V_n\,|\,n\in\bbZ)$ of vector spaces, indexed by the abelian group~$\bbZ$ of integers.
%\end{definition}
%\begin{remark}\label{rem:0_and_discrete}
Every vector space~$V$ gives rise to a graded vector space that is concentrated in degree~$0$. We recover the vector space from this graded vector space as the degree~$0$ part. We will use this observation to assume that everything is graded throughout this text, with apparently ungraded objects embedded in degree~$0$ if necessary.
%\end{remark}
If~$V$ and~$W$ are two graded vector spaces, their graded tensor product~\hbox{$V\otimes W$} is defined by the unsurprising formula
\[
(V\otimes W)_n=\bigoplus_{i+j=n}V_i\otimes W_j,
\]
and their graded homomorphism vector space~$\Hom(V,W)$ is defined by
\[
\Hom(V,W)_n=\prod_{m}\Hom(V_{m-n},W_m)=\prod_{m}\Hom(V_m,W_{m+n}),
\]
so that~$\Hom$ is right adjoint to the tensor product~$\otimes$, as it should be. This endows the category of graded vector spaces with the structure of a closed symmetric monoidal category with respect to the Koszul symmetry~\hbox{$v\otimes w\mapsto (-1)^{|v||w|}w\otimes v$}, where the superscripts~$|v|$ and~$|w|$ refer to the grades in which the vectors~$v$ and~$w$ live.
We will sometimes have reasons to shift graded vector spaces in either direction of the index. If~$V$ is a graded vector space, and~$n$ is an integer, then the graded vector spaces~$\Omega^n V$ and~$\Sigma^n V$ are defined by~$(\Omega^n V)_m = V_{m+n}$ and~$(\Sigma^n V)_m = V_{m-n}$.
This actually defines an automorphisms~$\Sigma$ of the category of graded vector spaces with inverse~$\Omega$. Finally, we will sometimes write~$V^t=V_{-t}$
whenever this is more convenient.

%%%

%\subsection{Differentials}

%\begin{definition}
A {\em differential graded vector space} is a graded vector space~$X$ together with a {\em differential}: an endomorphism~$\delta_X\in\Hom(X,X)_{-1}$ of degree~$-1$ such that~$\delta_X^2=0$.
%\end{definition}
In other words, the differential graded vector spaces are just the chain complexes. 
Every graded vector space can be regarded as a differential graded vector space that has zero differential. 
If~$X$ and~$Y$ are two differential graded vector spaces, their tensor product~\hbox{$X\otimes Y$} has differential~\hbox{$\delta(x\otimes y)=\delta x\otimes y+(-1)^{|x|}x\otimes\delta y$} and the homomorphism space~$\Hom(X,Y)$ has differential (still of degree~$-1$) determined by~\hbox{$\delta(fx)=(\delta f)x+(-1)^{|f|}f(\delta x)$.} This makes the category of differential graded vector spaces a closed symmetric monoidal category with respect to the same Koszul symmetry as for graded vector spaces. Both the shift automorphism~$\Sigma$ and its inverse~$\Omega$ change the sign of the differential. 

%%%

%\subsection{Homology}

There are at least two useful ways to pass from a differential graded vector space to a graded vector space: On the one hand, we can forget about the differential, and on the other hand, we can pass to {\it homology}. %Passing to homology yields an important invariant, but a lot of information is lost on that way. 
The homology~$\rmH_\bullet X$ of a differential graded vector space~$X$ is a graded vector space. We write~\hbox{$\rmH_nX=(\rmH_\bullet X)_n$} for the term in degree~$n$, as usual. 
Since we are working over a field, we have very simple formulas for the homology of tensor products and homomorphisms spaces:
\[
\rmH_\bullet(X\otimes Y)\cong(\rmH_\bullet X)\otimes(\rmH_\bullet Y)
\]
and
\[
\rmH_\bullet\Hom(X,Y)\cong\Hom(\rmH_\bullet X,\rmH_\bullet Y),
\]
both as graded vector spaces. Here, the second equation deserves to be spelled out for the particular grades:
\[
\rmH_n(\Hom(X,Y))\cong\Hom(\rmH_\bullet X,\Omega^n\rmH_\bullet Y)=\Hom(\Sigma^n\rmH_\bullet X,\rmH_\bullet Y).
\]

%%%

\section{Differential graded categories}\label{sec:cat}

Throughout this paper, we will have occasion to deal with differential graded categories, which are our main interest, and the more basic graded categories, which can be thought of as differential graded categories without differentials. Even more basic are the linear categories, which can be thought of as graded categories concentrated in degree~$0$. This section gives precise definitions and introduces our notation alongside with them. Compare with~\cite{Keller:ENS}, where the conventions are different, however.

A {\em differential graded category}~$\bfA$ consists of a set of objects together with differential graded vector spaces~$\bfA(x,y)$ for each pair~$(x,y)$ of objects, composition homomorphisms
\begin{equation}\label{eq:composition}
\bfA(y,z)\otimes\bfA(x,y)\longrightarrow\bfA(x,z),\,g\otimes f\longmapsto gf
\end{equation}
that are associative, and identities~$\id_x\in\bfA(x,x)_0$ for each object~$x$ that are neutral with respect to these compositions. A differential graded category with one object~$\star$ is essentially just a differential graded algebra~$C=\bfA(\star,\star)$. If~$C$ is a differential graded algebra, then the category of differential graded~$C$--modules~(with a suitable size restriction) is a differential graded category. 
%For instance, so is the full subcategory of finitely generated free differential graded~$C$--modules. 

%%%

A {\em graded category} is just a differential graded category~$\bfC$ where the differential is zero. A graded category~$\bfC$ with one object~$\star$ is essentially the same structure as a graded algebra~$A=\bfC(\star,\star)$. For instance, if~$\mathfrak{K}$ denotes our ground field, the algebra~$A$ could be~$\mathfrak{K}[\delta]/\delta^2$ with~$\delta$ in dimension~$-1$. Then the graded~$A$--modules are precisely the differential graded vector spaces. If~$A$ is a graded algebra, then the category of graded~$A$--modules~(with a suitable size restriction) is a graded category. 
%For instance, so is the full subcategory of finitely generated free graded~$A$--modules.

%~$\bfC$ consists of a set of objects, together with graded vector spaces~$\bfC(x,y)$ for each pair~$(x,y)$ of objects, composition homomorphisms
%\[
%\bfC(y,z)\otimes\bfC(x,y)\longrightarrow\bfC(x,z),\,g\otimes f\longmapsto gf
%\]
%that are associative, and identities~$\id_x\in\bfC(x,x)_0$ for each object~$x$ that are neutral with respect to these compositions.
%

%%%

A {\em linear category}~$\bfC$ is a graded category~$\bfC$ that is concentrated in degree~$0$. An algebra~$A$ can be thought of as a linear category with exactly one object~$\star$ and endomorphism algebra~$\bfC(\star,\star)=A$. Then the composition~\eqref{eq:composition} is just the multiplication in the algebra. If~$A$ is an algebra, then the category~$\Mod_A$ of~$A$--modules~(with a suitable size restriction) is a linear category. %For instance, so is the full subcategory of the finitely generated free~$A$--modules of the form~$A^r$ for some integer~$r\geqslant0$. 
%The degree zero part~$\bfC_0$ of a graded category~$\bfC$ is a linear category in the sense defined above. Conversely, any category can be thought of as a graded category by putting it into degree~$0$. 
%Compare Remark~\ref{rem:0_and_discrete}.

%%%

%\subsection{Homology categories}

As already mentioned before, there are two ways to pass from a differential graded vector space to a graded vector space, and there are, correspondingly, two ways to pass from a differential graded category to a graded category: On the one hand, we can forget the differentials, and on the other hand we can pass to homology. The latter path leads to the (graded) homology categories that we will briefly recall here, compare~\cite[Sec.~3]{Keller:ENS}. 
%Again, it will turn out that a lot of information is lost on that way.

Let~$\bfA$ be a differential graded category. The {\em homology category}~$\rmH_\bullet\bfA$ of~$\bfA$ is the graded category that has the same objects, and its morphism spaces are defined by~\hbox{$(\rmH_\bullet\bfA)(x,y)=\rmH_\bullet(\bfA(x,y))$}. The homology category of a differential graded category is a graded category. 
%The degree~$0$ part~$\rmH_0\bfA$ is the derived category in the ordinary (ungraded) sense.
If~$C$ is a differential graded algebra, then~$\rmH_\bullet C$ is its graded homology algebra. This type of example will be studied in the final Section~\ref{sec:top}, where~\hbox{$C=\rmC X$} will be the differential graded algebra of cochains on a space~$X$.

A complex of modules over an algebra~$A$ is {\em perfect} if it is (quasi-isomorphic to) a bounded complex of finitely generated projective~$A$--modules~\cite{SGA6}. If~$\Per_A$ denotes the differential graded category of perfect complexes over an algebra~$A$, then~\hbox{$\rmH_\bullet\Per_A=\Dper_A$} is the {\em perfect derived category} of~$A$ with its usual grading from its triangulated structure. Similarly, if~$\Per_X$ is the differential graded category of perfect complexes over a scheme~$X$, then~\hbox{$\rmH_\bullet\Per_X=\Dper_X$} is the perfect derived category of~$X$. These types of examples will be studied in Sections~\ref{sec:alg} and~\ref{sec:geo}, respectively.

%%%

\section{Automorphisms}\label{sec:auto}

So far, automorphisms of the categories under consideration have not been mentioned, except for the suspension and de-suspension functors~$\Sigma$ and~$\Omega$. Since automorphisms will play a major role when it comes to defining suitable notions of centers and Hochschild cohomology later on, we will now clarify their place in our context. 

%\begin{remark}
From a conceptual perspective, an automorphism is essentially the same thing as an action of the additive group of the integers~$\bbZ$, and one may be tempted to develop a theory in the generality of group actions on categories. We resist for the benefit of readability. The interested reader may find further inspiration in~\cite{HPS},~\cite{ENO}, and~\cite{TV}, for instance.
%\end{remark}

%%%

%\subsection{Gradings from automorphisms}\label{sec:construction}

In many cases, the (differential) graded categories with graded automorphisms that we will be concerned with are the result of a simple procedure that produces graded categories with graded automorphisms from categories with automorphisms. This procedure will now be explained.

Let~$T\colon\bfC\to\bfC$ be a linear automorphism of a linear category~$\bfC$. This yields a graded category with the same objects and
\begin{equation}\label{eq:defines}
	\bfC(x,y)_n=\bfC(T^nx,y)
\end{equation}
for all objects~$x$ and~$y$. The displayed formula shows that the category~$\bfC$ we started with can be recovered as the degree~$0$ part of this graded category. The automorphism~$T$ extends to a graded automorphism of the graded category,
\begin{equation}\label{eq:preserves}
	\bfC(x,y)_n=\bfC(T^nx,y)\overset{T}{\longrightarrow}\bfC(T^{n+1}x,Ty)=\bfC(Tx,Ty)_n.
\end{equation}

%\begin{example}\label{ex:triangulated}
It is obvious from~\eqref{eq:defines} that the identity functor on a category~$\bfC$ defines a rather uninteresting graded category. In contrast, if~$(\bfT,\Delta,\Sigma)$ is a triangulated category with shift automorphism~$\Sigma$, then this leads to the usual graded category with degree~$0$ part~$\bfT$. Note that the class~$\Delta$ of distinguished triangles is not needed anywhere here.
%\end{example}

%\begin{example}
On the other hand, given any graded category~$\bfC$, its identity functor is a graded automorphism, but it does not have to come from a functor on the degree~$0$ part in the way just described. In fact, rather many interesting graded categories are not defined by automorphisms of its degree~$0$ part. This can be the case, for instance, if~$\bfC$ is a graded algebra~\hbox{$A=\bfC(\star,\star)$}, where there is only one object~$\star$. In that case, an automorphism is just a graded automorphism of the graded algebra.
%\end{example}

%%%

%\subsection{The parity functors}\label{sec:parity}

It seems natural to think that every category~$\bfC$ has a preferred automorphism: the identity functor~$\Id_\bfC$. This is certainly true when~$\bfC$ is concentrated in degree~$0$. But, in the graded context, this is no longer the most useful thing to do. Here is a more useful alternative:
%\begin{definition}
Let~$\bfC$ be a graded category. The {\it parity functor}
\begin{equation}\label{eq:parity}
\Gr_\bfC\colon\bfC\longrightarrow\bfC
\end{equation}
is defined as the identity on objects, and~$\Gr_\bfC\colon\bfC(x,y)\to\bfC(x,y)$ sends a morphism~$f$ to~$(-1)^{|f|}f$.
%\end{definition}

%%%

%\subsection{Automorphism of derived categories}
%
%An automorphism of a differential graded module induces a graded automorphism on its (graded) homology. Correspondingly, an automorphism~$T$ of a differential graded category~$\bfA$ induces an automorphism~$\rmH_\bullet T$ of its (graded) derived category~$\rmH_\bullet\bfA$.

%%%

\section{Centers}\label{sec:Z}

The zeroth Hochschild cohomology of an algebra is its center. We therefore need to review the appropriate notions of centers for the various types of categories at hand. If~$\bfC$ is just a linear category, that is a graded category concentrated in degree~$0$, then its {\it center}~$\rmZ(\bfC)$ is the set of natural transformations~\hbox{$\Id_\bfC\to\Id_\bfC$} from the identity functor to itself. (This is sometimes called the {\em Bernstein center} of~$\bfC$, in reference to Bernstein's work~\cite{Bernstein} which discusses a particular case of interest.) The center is a commutative algebra under composition. If~$A$ is an algebra, thought of as a linear category with only one object, then we recover the usual concept of the center~$\rmZ(A)$ of~$A$. If~$\bfC=\Mod_A$ is a category of~$A$--modules that contains a free~$A$--module of rank~$1$, then the center~$\rmZ(\Mod_A)$ of that category is isomorphic to the center~$\rmZ(A)$ of the algebra, by evaluation on the generator.

%%%

%\subsection{Centers of graded categories}

Let now~$\bfC$ be a graded category. There is only one possible definition of its center: The {\em graded center}~$\rmZ_{\gr}^\bullet(\bfC)$ is the graded vector space that is defined as follows. The elements~$\Phi\in\rmZ_{\gr}^n(\bfC)$ in degree~$n$ are the families~$(\,\Phi_x\,|\,x\in\bfC\,)$ of morphisms~\hbox{$\Phi_x\in\bfC(x,x)_n$} that are natural in the sense that
\begin{equation}\label{eq:natural}
f\Phi_x=(-1)^{|f|n}\Phi_yf
\end{equation}
for all morphisms~$f\colon x\to y$. The graded center~$\rmZ_{\gr}^\bullet(\bfC)$ of a graded category is a graded algebra~(with respect to composition of morphisms) which is commutative in the graded sense. For any graded category~$\bfC$, the degree~$0$ part~$\rmZ_{\gr}^0(\bfC)$ of the graded center~$\rmZ_{\gr}^\bullet(\bfC)$ of~$\bfC$ is contained in the center~$\rmZ(\bfC_0)$ of the degree~$0$ part~$\bfC_0$ of~$\bfC$. But there is no reason why these two should be equal if~$\bfC$ is not concentrated in degree~$0$.

%%%

%\subsection{Automorphisms and centers}

We can now present a definition of the graded center of a graded category together with a graded automorphisms~(in the sense of the previous Section~\ref{sec:auto}).
%\begin{definition}\label{def:graded_center_autom}
Let~$(\bfC,T)$ be a graded category together with an automorphism that preserves the grading. Then the {\em graded center}~\hbox{$\rmZ_{\gr}^\bullet(\bfC,T)\subseteq\rmZ_{\gr}^\bullet(\bfC)$} is the graded vector space that is defined as follows. The vector space~$\rmZ_{\gr}^n(\bfC,T)$ is the subspace of the space~$\rmZ_{\gr}^n(\bfC)$ that consists of the elements which are compatible with~$T$ up to sign: families~\hbox{$(\,\Phi_x\,|\,x\in\bfC\,)$} of morphisms~\hbox{$\Phi_x\in\bfC(x,x)_n$} that are natural~\eqref{eq:natural} and such that
\begin{equation}\label{eq:sign_1}
\Phi_{T x}=(-1)^nT\Phi_x
\end{equation}
for all objects~$x$.
%\end{definition}
%\begin{remark}
The graded center~$\rmZ_{\gr}^\bullet(\bfC,T)$ of a graded category together with an automorphism that preserves the grading is a graded algebra (with respect to composition of morphisms). It is commutative in the graded sense, because it is a subalgebra of the algebra~$\rmZ_{\gr}^\bullet(\bfC)$ that is commutative in the graded sense.
%\end{remark}

One may wonder if, given a graded category~$\bfC$, there is a canonical choice of an automorphism~$T$ of~$\bfC$ such that~$\rmZ_{\gr}^\bullet(\bfC,T)$ is the plain center~$\rmZ_{\gr}^\bullet(\bfC)$. There is no reason why the identify functor~$T=\Id_{\bfC}$ should work, due to the sign in~\eqref{eq:sign_1}. In contrast, the parity functor~$\Gr_\bfC$ always works, so that we have~\hbox{$\rmZ_{\gr}^\bullet(\bfC,\Gr_\bfC)=\rmZ_{\gr}^\bullet(\bfC)$}, because the signs cancel each other.

%%%

%\subsection{Gradings defined by automorphisms}

In the special case where the graded category and its graded automorphism come from an automorphism on a category, as in Section~\ref{sec:auto}, we have the following result that is an immediate consequence of the definitions.

\vbox{\begin{proposition}\label{prop:agrees}
Given a category~$\bfC$ together with an automorphism~$T$, the degree~$n$ part of the graded center of the associated graded category with graded automorphism is the same as the subspace of the space of natural transformations~$\Id_\bfC\to T^n$ of functors~\hbox{$\bfC\to\bfC$} such that~\eqref{eq:sign_1} holds.
\end{proposition}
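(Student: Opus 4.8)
The plan is to read both sides off the definitions and to exhibit the obvious identification, degree by degree. By~\eqref{eq:defines}, an endomorphism of degree~$n$ of an object~$x$ in the graded category associated to~$(\bfC,T)$ is nothing but a morphism of~$\bfC$ between~$x$ and its translate by~$T^n$; so a family~$(\Phi_x\mid x\in\bfC)$ with~$\Phi_x\in\bfC(x,x)_n$ is literally the same underlying data as a family of morphisms of~$\bfC$, that is, a candidate natural transformation~$\Id_\bfC\to T^n$. What then remains is to match the two conditions that cut out~$\rmZ_{\gr}^n(\bfC,T)$ among all such families --- graded naturality~\eqref{eq:natural} against \emph{every} graded morphism~$f$, together with the $T$-compatibility~\eqref{eq:sign_1} --- against the two conditions on the other side: ordinary naturality, tested only against plain morphisms of~$\bfC$, and the verbatim transcription of~\eqref{eq:sign_1}.

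The condition~\eqref{eq:sign_1} transcribes literally: by~\eqref{eq:preserves} the extended graded automorphism acts on~$\bfC(x,x)_n=\bfC(T^nx,x)$ precisely as the original functor~$T$, so~$\Phi_{Tx}=(-1)^nT\Phi_x$ is the same equation of morphisms of~$\bfC$ on either side. The one genuine computation is the reduction of the naturality requirement. Take a morphism~$f\colon x\to y$ of degree~$k$; by~\eqref{eq:defines} this is just a morphism of~$\bfC$, and spelling out both composites in~\eqref{eq:natural} with the induced composition of the associated graded category (the one obtained from that of~$\bfC$ by inserting the appropriate powers of~$T$) turns~\eqref{eq:natural} into an identity of morphisms of~$\bfC$. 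Iterating~\eqref{eq:sign_1} gives~$\Phi_{T^kx}=(-1)^{nk}T^k\Phi_x$; feeding this into the degree~$0$ naturality square at the object~$T^kx$ reproduces exactly the degree~$k$ instance of~\eqref{eq:natural}, the Koszul sign~$(-1)^{|f|n}=(-1)^{kn}$ appearing there being precisely the sign produced by this iteration. Hence, granted~\eqref{eq:sign_1}, the condition~\eqref{eq:natural} for all graded morphisms is equivalent to ordinary naturality of the family~$(\Phi_x)$, namely its degree~$0$ instance; the converse is trivial, degree~$0$ being a special case.

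Assembling the two points yields the asserted bijection between~$\rmZ_{\gr}^n(\bfC,T)$ and the natural transformations~$\Id_\bfC\to T^n$ satisfying~\eqref{eq:sign_1}; it is evidently linear, hence an isomorphism of vector spaces in each degree~$n$. The one step needing care --- and the reason this deserves a proof at all, even though it is immediate --- is the sign accounting in the middle: one must confirm that the Koszul sign built into the definition of the graded center is matched exactly by the sign in the $T$-compatibility condition, leaving no stray factor. Everything else is pure transcription through~\eqref{eq:defines} and~\eqref{eq:preserves}.
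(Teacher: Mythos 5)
Your proof is correct. The paper offers no argument at all---it states the proposition as ``an immediate consequence of the definitions''---and your unwinding is exactly the verification the authors have in mind, the only nontrivial point being the one you isolate: the Koszul sign $(-1)^{|f|n}$ in~\eqref{eq:natural} is precisely absorbed by the iterate $\Phi_{T^{|f|}x}=(-1)^{n|f|}T^{|f|}\Phi_x$ of~\eqref{eq:sign_1}, so that graded naturality against a degree~$|f|$ morphism reduces to ordinary naturality of the family in~$\bfC$.
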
}

To lighten the notation, we will simply write~$\rmZ_{\gr}^\bullet(\bfC,T)$ for the graded center of the associated graded category with graded automorphism. This cannot lead to confusion, because~$\rmZ_{\gr}^\bullet(\bfC,T)$ is undefined so far if~$\bfC$ is only a linear category without grading. 

%\begin{remark}
Let~$(\bfT,\Delta,\Sigma)$ be a triangulated category with shift automorphism~$\Sigma$. Then Proposition~\ref{prop:agrees} implies that the graded center of the associated graded category with graded automorphism is the graded center of the triangulated category~$\bfT$ as studied by Buchweitz and Flenner~\cite{Buchweitz+Flenner}, Linckelmann~\cite{Linckelmann}, and Krause and Ye~\cite{Krause+Ye}. It seems from this discussion that our definition is slightly more general than theirs.
%\end{remark}

%%%

%\subsection{Centers of differential graded categories}

If~$\bfA$ is a differential graded category, then we can of course ignore the differential. This gives a graded category~$\bfA$, and we are free to consider the graded center~$\rmZ_{\gr}^\bullet(\bfA)$. This is certainly a graded algebra. Remembering the differential on~$\bfA$, we see that the graded center~$\rmZ_{\gr}^\bullet(\bfA)$ has the structure of a differential graded algebra. Passing to homology we get the graded algebra~$\rmH_\bullet(\rmZ_{\gr}^\bullet(\bfA))$. We will soon see that this graded algebra re-appears when we discuss the edge homomorphisms of the forgetful spectral sequences in Section~\ref{sec:forgetful_ss}. 

\begin{remark}\label{rem:not_defined}
At this point it may be tempting to define the ``differential graded center'' of the differential graded category~$\bfA$ either as the differential graded algebra~$\rmZ_{\gr}^\bullet(\bfA)$ or perhaps rather as its graded homology algebra~$\rmH_\bullet(\rmZ_{\gr}^\bullet(\bfA))$ from above. But, we will see later that, from a conceptual point of view, neither of these is entirely adequate, compare Remark~\ref{rem:justification}.
\end{remark} 

The discussion here would have been the same in the presence of an automorphism; only the notation would have been a little bit more involved.

%%%

%\subsection{An example}

We close this section with an example that illustrates the role of the automorphism when it comes to calculating centers: We will describe the graded centers of the graded categories~$\Per_A$ of perfect complexes over algebras~$A$ that are differential graded algebras concentrated in degree~$0$. The automorphism~$\Sigma$ is given by the shift functor.

\begin{proposition}\label{prop:shift}
There are canonical isomorphisms
\[
\rmZ^0_{\gr}(\Per_A,\Sigma)\cong\rmZ^1_{\gr}(\Per_A,\Sigma)\cong\rmZ(A),
\] 
and the graded center is trivial in all other degrees~$t\not=0,1$.
\end{proposition}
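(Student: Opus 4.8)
The plan is to compute the graded center $\rmZ_{\gr}^\bullet(\Per_A,\Sigma)$ degree by degree, using the description from Proposition~\ref{prop:agrees}: an element of $\rmZ_{\gr}^t(\Per_A,\Sigma)$ is a natural transformation $\Phi\colon\Id\to\Sigma^t$ of functors on $\Per_A$ (equivalently on $\Dper_A$, since $\Per_A$ is the dg category of perfect complexes with homology category $\Dper_A$ and the center only sees the homotopy-invariant structure) subject to the compatibility~\eqref{eq:sign_1} with the shift. The key input is that $A$ lives in degree $0$, so it is an honest algebra, and the free module of rank one, $A\in\Per_A$, is a compact generator. First I would restrict $\Phi$ to this generator: $\Phi_A$ is an element of $\Dper_A(A,\Sigma^t A)=\Hom_{\Dper_A}(A,A[t])$, which for an algebra concentrated in degree $0$ is $\rmZ(A)$ if $t=0$ and vanishes for $t>0$ (there are no negative $\Ext$'s and no self-extensions in the wrong direction for a free module). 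This already kills all degrees $t\ge 2$ and pins down $\rmZ^0_{\gr}(\Per_A,\Sigma)$, because naturality forces $\Phi$ on all of $\Per_A$ to be determined by $\Phi_A$ — a perfect complex is built from $A$ by shifts and cones, and a natural transformation is determined on a generator under mild conditions.

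Next I would handle the two remaining degrees $t=0$ and $t=1$ in parallel, exploiting a symmetry. For $t=0$: restriction to $A$ gives a map $\rmZ^0_{\gr}(\Per_A,\Sigma)\to\rmZ(A)$; to see it is an isomorphism I construct an inverse sending $z\in\rmZ(A)$ to the natural transformation ``multiplication by $z$'' on every perfect complex, which is visibly natural, commutes with the differentials and with the shift (the sign in~\eqref{eq:sign_1} is trivial since $t=0$), and is the identity on $A\mapsto z$; injectivity follows from the generation argument above. For $t=1$: here the sign $(-1)^n$ in~\eqref{eq:sign_1} becomes nontrivial, and this is exactly where the automorphism $\Sigma$ does real work. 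The point is that a degree-one natural transformation $\Phi\colon\Id\to\Sigma$ that anticommutes with $\Sigma$ in the sense of~\eqref{eq:sign_1} is the same datum as a degree-zero one, via the ``twist'' $\Phi\mapsto\Sigma^{-1}\Phi\circ(\text{something})$ — more precisely, giving such a $\Phi$ is equivalent to giving, for each $x$, a map $x\to x[1]$ compatible with all morphisms up to the Koszul sign; the map $A\to A[1]$ is forced to be zero by the $\Ext$ computation, but the \emph{natural transformation} need not vanish because $\Per_A$ contains objects with higher self-$\Ext$. I expect the cleanest route is to identify $\rmZ^1_{\gr}(\Per_A,\Sigma)$ with the degree-zero graded center of $(\Per_A,\Sigma)$ after twisting by the sign character, i.e. to set up an isomorphism $\rmZ^1_{\gr}(\Per_A,\Sigma)\cong\rmZ^0_{\gr}(\Per_A,-\Sigma)$, and then observe that replacing $\Sigma$ by $\Gr\circ\Sigma$ (which differs by exactly the parity sign) turns condition~\eqref{eq:sign_1} for $n=1$ back into the sign-free condition, yielding again $\rmZ(A)$.

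Concretely, I would exhibit the isomorphism $\rmZ^1_{\gr}(\Per_A,\Sigma)\cong\rmZ(A)$ directly: given $z\in\rmZ(A)$, define $\Phi_x\colon x\to x[1]$ on a perfect complex $x=(x_\bullet,d)$ to be ``multiplication by $z$ followed by the connecting identification'' — but since $x\to x[1]$ shifts degree by one, the natural candidate is $\Phi_x = z\cdot d_x$ interpreted appropriately, or rather the component built from the differential itself; the differential $d$ of a perfect complex \emph{is} a canonical degree $-1$ endomorphism, hence $\Sigma d$ gives a canonical element of degree... — here I will need to be a little careful, and I suspect the correct formula is that $\Phi_x$ is the image of $\id_x$ under the canonical map $\bfA(x,x)_0\to\rmH_{-1}\bfA(x,x)\cong\Hom(\rmH_\bullet x,\Sigma\rmH_\bullet x)$ twisted by $z$, using that the differential on the dg endomorphism complex of a perfect complex records exactly this shift datum. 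The verification that this is natural and satisfies~\eqref{eq:sign_1} for $n=1$ is a sign-chase with the Koszul rule, and that it is the identity on the generator identifies it with $z$. The main obstacle, then, is not the degrees $t\ge 2$ or $t=0$, which are routine, but the degree $t=1$ isomorphism: getting the canonical element of $\rmZ^1_{\gr}(\Per_A,\Sigma)$ right and checking that the sign conditions~\eqref{eq:natural} and~\eqref{eq:sign_1} are simultaneously satisfied requires care, because this is precisely the phenomenon — a ``shift class'' in the center not visible on the generator as an honest morphism — that the choice of automorphism $\Sigma$ rather than $\Id$ is designed to capture.
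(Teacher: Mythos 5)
The central step of your argument --- replacing $\rmZ^t_{\gr}(\Per_A,\Sigma)$ by the graded center of the derived category $\Dper_A$ and then restricting to the compact generator $A$ --- does not work, and it is not what the paper does. The proposition concerns the graded center of the differential graded category $\Per_A$ regarded as a \emph{graded} category with the differential forgotten: a degree-$t$ element is a family of honest homogeneous maps $\Phi_x\in\Per_A(x,x)_{-t}$, natural with Koszul signs against \emph{all} homogeneous morphisms, not a natural transformation of functors on the homotopy category. These two centers differ drastically. The remark immediately after the paper's proof points out that every element of $\rmZ^1_{\gr}(\Per_A,\Sigma)$ acts trivially on the derived category, and Section~\ref{sec:alg} (via Krause--Ye) shows that for $A=\mathfrak{K}[\epsilon]/(\epsilon^2)$ the derived center $\rmZ^0_{\gr}(\Dper_A,\Sigma)$ is infinite-dimensional, whereas Proposition~\ref{prop:shift} gives the two-dimensional $\rmZ(A)$; so your identification would make the statement false. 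Relatedly, your generator argument proves too much: the endomorphism complex of the generator $A$ is concentrated in degree $0$, so restriction to $A$ kills degree $1$ exactly as it kills degrees $t\geqslant2$. You notice this tension and abandon the mechanism for $t=1$ (``the natural transformation need not vanish''), but then the vanishing for $t\geqslant2$ is no longer justified either; and ``a natural transformation is determined on a generator'' is not available in $\Dper_A$, where cones are not functorial.

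The paper's proof instead tests centrality against the contractible two-term complexes $\rmD^n_A$ (copies of $A$ in degrees $n$ and $n-1$ with identity differential). These represent the functors $V\mapsto V_n$, so a central element is detected on them and is forced to be of the form $v\mapsto a_nv$ in degree $0$, respectively $v\mapsto a_n\delta v$ in degree $1$, with $a_n\in\rmZ(A)$; evaluation on $\rmD^n_A$ yields $a_n=a_{n-1}$ in degree $0$, while in degree $1$ the relation $\delta^2=0$ imposes no constraint and it is shift-invariance that cuts $\prod_n\rmZ(A)$ down to a single copy of $\rmZ(A)$. Since $\rmD^n_A$ is concentrated in two adjacent degrees, it has no homogeneous self-maps in the degrees corresponding to $t\neq0,1$, which gives the vanishing statement. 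Your guessed formula $\Phi_x=z\cdot d_x$ for the degree-one class is indeed the right one --- but it restricts to zero on the generator $A$ and is visible only on complexes with nonzero differential, which is precisely why any approach that works in $\Dper_A$, or that tries to determine $\Phi$ from $\Phi_A$, must fail here.
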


\begin{proof} 
Let us first study the part in degree~$0$. We use the chain complexes~$\rmD^n_A$ with~$(\rmD^n_A)_k=A$ for~$k=n,n-1$ and differential~$\id_A$ between these groups; everything else is zero. Then there is a natural isomorphism~$\Hom(\rmD^n_A,V)\cong V_n$, so that~$\rmD^n_A$ represents the functor~\hbox{$V\mapsto V_n$}. It follows that any~$\Phi$ in the center is determined by a family~$(\,a_n\,|\,n\in\bbZ\,)$ of elements~$a_n$ in~$A$: The chain map~$\Phi_V$ sends~$v$ to~$a_nv$ if~$v$ is an element of degree~$n$. Evaluating this on~$V=\rmD^n_A$ shows that~$a_n=a_{n-1}$ for all~$n$, so that~$\Phi$ is determined by an element~$a$ in~$A$. Naturality forces~$a$ to lie in the center~$\rmZ(A)$ of the algebra~$A$. Conversely, it is obvious that multiplication by an element~$a$ in the center~$\rmZ(A)$ of the algebra~$A$ defines an element in the center of the category. Note that these elements are automatically compatible with the shift automorphism.

For the other degrees~$t\not=0$, we again use the chain complexes~$\rmD^n_A$. Since these are concentrated in degrees~$n$ and~$n-1$ of distance~$1$, the last statement immediately follows. For the second one, we see that we necessarily have elements~$a_n\in\rmZ(A)$ such that~$\Phi$ is given by ~$v\mapsto a_n\delta v$ for all elements~$v$ of degree~$n$. Here, since we have~$\delta^2=0$, there are no relations between the elements~$a_n$. This shows
\[
\rmZ^1_{\gr}(\Per_A)\cong\prod_{n\in\bbZ}\rmZ(A).
\] 
Since we are imposing shift invariance, this then implies that one of the elements~$a_n$ is enough to determine the others.
\end{proof}

Note that we have~$[a_n\delta v]=[\delta a_nv]=0$ in homology, so that the elements in the degree~$1$ part~$\rmZ^1_{\gr}(\Per_A)$ of the center all act trivially on the derived category.

%It is clear from the preceding arguments that the result also holds for the larger category~$\Per_A$ of unbounded chain complexes.

%%%

\section{Hochschild cohomology}\label{sec:HM}

In this section we recall the definition of Hochschild cohomology for graded categories and differential graded categories, possibly with automorphisms.

Let~$\bfC$  be a (small) graded category. Let~$M\colon\bfC^{\op}\otimes\bfC\to\Mod_\mathfrak{K}$ be a graded linear functor to the category of vector spaces over the ground field~$\mathfrak{K}$, also known as a~{\em~$(\bfC,\bfC)$--bimodule}. Consider the cosimplicial vector space whose~$s$--th term is the vector space
\[
\CH_{\gr}^s(\bfC\,;\,M)=\prod_{x_0,\dots,x_s}\Hom( \bfC(x_1,x_0)\otimes\dots\otimes\bfC(x_s,x_{s-1}) ,M(x_s,x_0))
\]
where the indices~$x_0,\dots,x_s$ range over the~$(s+1)$--tuples of objects of~$\bfC$. 
The cosimplicial structure maps are the usual ones.
From this cosimplicial vector space, the alternating sum formula produces a cochain complex~$\CH_{gr}(\bfC\,;\, M)$ in graded vector spaces, the {\em Hochschild cochain complex}. Compare~\cite[Sec.~17]{Mitchell} and also~\cite{Baues+Wirsching}. The {\em Hochschild cohomology} of the graded category~$\bfC$ is the cohomology 
\[
\HH_{\gr}^s(\bfC\,;\,M)=\rmH^s(\CH_{\gr}(\bfC\,;\,M))
\]
of the cochain complex~$\CH_{\gr}(\bfC\,;\,M)$. The Hochschild cohomology of a graded category is a graded commutative algebra.

\begin{example}\label{ex:der_nat_trf}
Let~$F,G\colon\bfC\to\bfC'$ be two graded linear functors between graded categories. Then~$M(x_s,x_0)=\bfC'(Fx_s,Gx_0)$ gives a~$(\bfC,\bfC)$--bimodule~$M=\bfC'(F,G)$, and here the cohomology~$\HH_{gr}^\bullet(\bfC\,;\,M)=\HH_{gr}^\bullet(\bfC\,;\,\bfC'(F,G))$ should be thought of as the graded vector space of derived natural transformations~$F\to G$. The special case~\hbox{$F=\Id_\bfC=G$} leads to the bimodule~$\bfC=\bfC(\Id_\bfC,\Id_\bfC)$, and this case recovers the (plain) Hochschild cohomology~$\HH_{gr}^\bullet(\bfC)$ of the graded category~$\bfC$. 
\end{example}

%Consider the cosimplicial graded vector space whose~$s$--th term is the graded vector space
%\[
%\CH_{\gr}^s(\bfC\,;\,M)=\prod_{x_0,\dots,x_s}\Hom( \bfC(x_1,x_0)\otimes\dots\otimes\bfC(x_s,x_{s-1}) , \bfC(x_s,x_0) )
%\]
%where the indices~$x_0,\dots,x_s$ range over the set of~$(s+1)$--tuples of objects of~$\bfC$. The cosimplicial structure maps are given by the usual Hochschild formulas. The alternating sum produces a cochain complex in graded vector spaces from this cosimplicial graded vector space, the {\em cohomological Hochschild complex}~$\CH_{\gr}(\bfC)$. 

%\begin{definition}
%The {\em Hochschild cohomology} of the graded category~$\bfC$ is the cohomology 
%\[
%\HH_{\gr}^s(\bfC)=\rmH^s(\CH_{\gr}(\bfC))
%\]
%of the cochain complex~$\CH_{\gr}(\bfC)$.
%\end{definition}

Note that by this definition of the Hochschild cohomology~$\HH_{\gr}^s(\bfC)$ of a graded category~$\bfC$, the result is a graded vector space, so that it makes sense to write~$\HH_{\gr}^s(\bfC)_t$, or~$\HH_{\gr}^{s,-t}(\bfC)$ as will often be convenient. Using the suspension~$\Sigma$ on the graded morphism spaces, we can actually do without this second grading by using coefficients:~\hbox{$\HH_{\gr}^{s,t}(\bfC\,;\,M)=\HH_{\gr}^{s,0}(\bfC\,;\,\Sigma^tM)$}. We will abbreviate this as~$\HH_{\gr}^s(\bfC\,;\,\Sigma^tM)$. In this notation (and the one introduced in Example~\ref{ex:der_nat_trf}) we can write:

\begin{proposition}\label{prop:identification_1}
We have~$\HH_{\gr}^0(\bfC\,;\,\Sigma^t\bfC)=\rmZ_{\gr}^t(\bfC)$ if~$\bfC=\bfC(\Id_\bfC,\Id_\bfC)$.
\end{proposition}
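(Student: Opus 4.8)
The plan is to unwind both sides of the claimed equality directly from their definitions and check that the resulting conditions coincide, including the signs. Since the bimodule is $M=\bfC=\bfC(\Id_\bfC,\Id_\bfC)$, the degree-zero Hochschild cochains are $\CH_{\gr}^0(\bfC\,;\,\Sigma^t\bfC)=\prod_{x_0}\Hom(\mathfrak{K},\Sigma^t\bfC(x_0,x_0))$, which by the adjunction unit is just $\prod_{x_0}(\Sigma^t\bfC(x_0,x_0))_0=\prod_{x_0}\bfC(x_0,x_0)_t$. So a $0$-cochain is exactly a family $(\Phi_x)_{x\in\bfC}$ with $\Phi_x\in\bfC(x,x)_t$, matching the underlying data of an element of $\rmZ_{\gr}^t(\bfC)$.

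Next I would compute the cosimplicial (hence cochain) differential $d^0\colon\CH_{\gr}^0\to\CH_{\gr}^1$. The target $\CH_{\gr}^1(\bfC\,;\,\Sigma^t\bfC)=\prod_{x_0,x_1}\Hom(\bfC(x_1,x_0),\Sigma^t\bfC(x_1,x_0))$, and the alternating sum of the two cofaces sends $(\Phi_x)$ to the assignment $f\mapsto f\Phi_{x_1}-(\text{sign})\,\Phi_{x_0}f$ for $f\colon x_1\to x_0$. The key point is to keep track of the Koszul sign introduced by the suspension $\Sigma^t$ and by passing $f$ past the degree-$t$ coefficient element: this is precisely where the factor $(-1)^{|f|t}$ appears. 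Thus $d^0(\Phi)=0$ if and only if $f\Phi_{x_1}=(-1)^{|f|t}\Phi_{x_0}f$ for all morphisms $f$, which is exactly the naturality condition~\eqref{eq:natural} defining $\rmZ_{\gr}^t(\bfC)$. Since $\HH_{\gr}^0=\ker d^0$ (there is no incoming differential), this identifies $\HH_{\gr}^0(\bfC\,;\,\Sigma^t\bfC)$ with $\rmZ_{\gr}^t(\bfC)$ as graded vector spaces; one then remarks that composition of cochains in degree $0$ corresponds to composition of central families, so the identification is one of graded algebras.

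The main obstacle is purely bookkeeping: making the sign in the coface maps match the sign in~\eqref{eq:natural}. One has to fix, once and for all, how the suspension $\Sigma^t$ acts on $\Hom$-spaces and on the cosimplicial cofaces, and how the Koszul rule is applied when evaluating a Hochschild cochain (with values in a shifted bimodule) on a morphism $f$. I would isolate this in a single short computation with $s=1$, treating the general-coefficient case $\HH_{\gr}^{s,t}(\bfC\,;\,M)=\HH_{\gr}^{s,0}(\bfC\,;\,\Sigma^tM)$ as the definition, so that no separate internal grading needs to be carried. Everything else — the identification of $0$-cochains, the vanishing of $d^{-1}$, the multiplicativity — is formal.
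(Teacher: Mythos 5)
Your proposal is correct and follows exactly the route of the paper's proof, which simply states that the claim ``follows immediately from the definition of the first differential in the Hochschild complex''; you have spelled out that one line by identifying the~$0$--cochains with families~$(\Phi_x)$ and the kernel of~$d^0$ with the graded naturality condition~\eqref{eq:natural}, sign included. No discrepancy with the paper's argument.
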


\begin{proof}
This follows immediately from the definition of the first differential in the Hochschild complex.
\end{proof}

%%%

%\subsection{Automorphisms and Hochschild cohomology I}

If~$T$ is an automorphism of the graded category~$\bfC$, then we can generalize the process that leads to the definition of the graded center, and introduce the subspaces~\hbox{$\CH_{\gr}^s(\bfC,T)\subseteq\CH_{\gr}^s(\bfC)$} by requiring that the cochains are~$T$--invariant up to the natural sign~\eqref{eq:sign_1}:
\begin{equation}\label{eq:sign_2}
\Phi(T f_1,\dots,T f_s)=(-1)^{|\Phi|}T(\Phi(f_1,\dots,f_s)).
\end{equation}
These subspaces are preserved by the differential, so that we get induced homomorphisms~\hbox{$\HH_{\gr}^s(\bfC,T)\to\HH_{\gr}^s(\bfC)$} between the Hochschild cohomology groups.

\begin{remark}\label{rem:TGr}
When the automorphism~$T$ is the parity functor~$\Gr_\bfC$ of~$\bfC$ as in~\eqref{eq:parity}, then we have~\hbox{$\HH_{\gr}^s(\bfC,\Gr_\bfC)=\HH_{\gr}^s(\bfC)$},
because for the signs in~\eqref{eq:sign_2} we get
\[
|\Phi(T f_1,\dots,T f_s)|=|f_1|+\dots+|f_s|
\]
and
\[
|T(\Phi(f_1,\dots,f_s))|=|\Phi|+|f_1|+\dots+|f_s|,
\]
respectively.
\end{remark}

The analogue of Proposition~\ref{prop:identification_1} is:

\begin{proposition}\label{prop:identification_2}
We have~$\HH_{\gr}^{0,t}(\bfC,T)\cong\rmZ_{\gr}^t(\bfC,T)$ as graded algebras, for every graded category~$\bfC$ with automorphism~$T$.
\end{proposition}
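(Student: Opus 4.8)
The plan is to deduce this from Proposition~\ref{prop:identification_1} together with the compatibility of all constructions with the $T$--invariance conditions. First I would recall that Proposition~\ref{prop:identification_1} already identifies $\HH_{\gr}^{0}(\bfC\,;\,\Sigma^t\bfC)$ with $\rmZ_{\gr}^t(\bfC)$, via the observation that a $0$--cochain is a family $(\Phi_x)$ with $\Phi_x\in\bfC(x,x)_t$, and that such a family is a cocycle precisely when the first Hochschild differential vanishes on it, which unwinds to the naturality condition~\eqref{eq:natural} (after absorbing the suspension $\Sigma^t$). There are no $(-1)$--cochains, so $\HH_{\gr}^{0}$ equals the space of $0$--cocycles, and the algebra structure on both sides is composition of morphisms, so the identification is one of graded algebras.

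The next step is to observe that the $T$--invariant subcomplex $\CH_{\gr}^\bullet(\bfC,T)\subseteq\CH_{\gr}^\bullet(\bfC)$, restricted to degree $s=0$ and coefficients $\Sigma^t\bfC$, picks out exactly the $0$--cochains $\Phi=(\Phi_x)$ satisfying the sign condition~\eqref{eq:sign_2}. In degree $s=0$ a cochain takes no arguments, so~\eqref{eq:sign_2} reads $\Phi_{Tx}=(-1)^{|\Phi|}T\Phi_x$, i.e.\ $\Phi_{Tx}=(-1)^{t}T\Phi_x$ once we account for the fact that a $\Sigma^t\bfC$--valued cochain of cohomological degree $0$ is internally of degree $t$. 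This is precisely condition~\eqref{eq:sign_1} defining $\rmZ_{\gr}^t(\bfC,T)\subseteq\rmZ_{\gr}^t(\bfC)$. Since the differential on $\CH_{\gr}^\bullet(\bfC)$ preserves the $T$--invariant subspaces, the degree $s=0$ cohomology of $\CH_{\gr}^\bullet(\bfC,T)$ is the subspace of $\HH_{\gr}^{0}(\bfC\,;\,\Sigma^t\bfC)$ cut out by~\eqref{eq:sign_1}, which under the isomorphism of Proposition~\ref{prop:identification_1} corresponds exactly to $\rmZ_{\gr}^t(\bfC,T)$.

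Finally I would note that the identification respects products: both $\HH_{\gr}^{0,\bullet}(\bfC,T)$ and $\rmZ_{\gr}^\bullet(\bfC,T)$ are subalgebras of $\HH_{\gr}^{0,\bullet}(\bfC)\cong\rmZ_{\gr}^\bullet(\bfC)$ with the product induced by composition, and the isomorphism of Proposition~\ref{prop:identification_1} restricts to these subalgebras, so it is an isomorphism of graded algebras. The main obstacle is bookkeeping rather than mathematics: one must check that the two ways of encoding the internal degree $t$ — as a suspension $\Sigma^t$ of the coefficient bimodule versus as an explicit second grading on morphisms — are matched up compatibly with the signs, so that the exponent $|\Phi|$ in~\eqref{eq:sign_2} becomes the exponent $n=t$ in~\eqref{eq:sign_1}. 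Once this sign match is made explicit, the statement follows immediately, so the proof can be kept very short.

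\begin{proof}
By Proposition~\ref{prop:identification_1}, applied with coefficients $\Sigma^t\bfC$, the space $\HH_{\gr}^{0,t}(\bfC)=\HH_{\gr}^{0}(\bfC\,;\,\Sigma^t\bfC)$ is the space of $0$--cocycles of the Hochschild complex, and such a cocycle is exactly a family $(\,\Phi_x\,|\,x\in\bfC\,)$ with $\Phi_x\in\bfC(x,x)_t$ satisfying the naturality condition~\eqref{eq:natural}; that is, it is an element of $\rmZ_{\gr}^t(\bfC)$, and the identification is one of graded algebras since both products are given by composition of morphisms.

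It remains to see that the subcomplex $\CH_{\gr}^\bullet(\bfC,T)$ recovers $\rmZ_{\gr}^\bullet(\bfC,T)$ on the left-hand side. A $0$--cochain takes no arguments, so the $T$--invariance condition~\eqref{eq:sign_2} specializes to $\Phi_{Tx}=(-1)^{|\Phi|}T\Phi_x$; with coefficients $\Sigma^t\bfC$ the cochain $\Phi$ has internal degree $t$, so this reads $\Phi_{Tx}=(-1)^{t}T\Phi_x$, which is precisely condition~\eqref{eq:sign_1}. Since the differential preserves the subspaces $\CH_{\gr}^\bullet(\bfC,T)\subseteq\CH_{\gr}^\bullet(\bfC)$ and there are no cochains in negative degree, $\HH_{\gr}^{0,t}(\bfC,T)$ is the subspace of $\HH_{\gr}^{0,t}(\bfC)$ consisting of those cocycles satisfying~\eqref{eq:sign_1}, which under the isomorphism above is exactly $\rmZ_{\gr}^t(\bfC,T)$. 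Both sides are subalgebras of $\HH_{\gr}^{0,\bullet}(\bfC)\cong\rmZ_{\gr}^\bullet(\bfC)$ closed under the composition product, so the isomorphism is one of graded algebras.
\end{proof}
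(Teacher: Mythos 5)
Your proof is correct and follows essentially the same route as the paper, which states Proposition~\ref{prop:identification_2} as the direct analogue of Proposition~\ref{prop:identification_1} and offers no further argument beyond ``this follows immediately from the definition of the first differential in the Hochschild complex.'' Your writeup simply makes explicit the unwinding of the $s=0$ cocycle condition and the specialization of~\eqref{eq:sign_2} to~\eqref{eq:sign_1}, which is exactly what the paper leaves implicit.
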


%%%

%\subsection{Hochschild cohomology for differential graded categories}\label{sec:HMdifferential graded}

Let now~$\bfA$ be a (small) differential graded category, and~$M$ an~$(\bfA,\bfA)$--bimodule. Consider the cosimplicial differential graded vector space whose~$s$--th term is the differential graded vector space~$\rmE_0^{s}(\bfA\,;\,M)$ given by
\[
\rmE_0^{s,t}(\bfA\,;\,M)=\prod_{x_0,\dots,x_s}\Hom( \bfA(x_1,x_0)\otimes\dots\otimes\bfA(x_s,x_{s-1}) , M(x_s,x_0) )^t
\]
where the indices~$x_0,\dots,x_s$ range over the objects of~$\bfA$. The cosimplicial structure maps are given again by the usual formulas. The alternating sum produces a cochain complex in differential graded vector spaces from this cosimplicial differential graded vector space.

\begin{remark}\label{rem:not_wanted}
In contrast to the procedure for graded categories, we will {\em not} consider this as the cohomological Hochschild complex, and its cohomology is {\em not} the Hochschild cohomology. We will explain later~(in Section~\ref{sec:forgetful_ss}) how these too naive ideas lead to cohomology groups that contain more than what is usually considered as interesting; what needs to be cut down can be made precise, and we will do so later~(again, in Section~\ref{sec:forgetful_ss}). Before that, we recall the usual~(genuinely interesting) definition. 
\end{remark}

The definition of the Hochschild cohomology of a differential graded category starts from the observation that a cochain complex in differential graded vector spaces is a bicomplex: The {\em Hochschild complex} of a differential graded category~$\bfA$ is the~(product) total complex~$\CH_{\dg}(\bfA\,;\,M)=\Tot(\rmE_0(\bfA\,;\,M))$ of the bicomplex~$\rmE_0(\bfA\,;\,M)$. This is a cochain complex in vector spaces. Its cohomology~\hbox{$\HH_{\dg}^s(\bfA\,;\,M)=\rmH^s(\CH_{\dg}(\bfA\,;\,M))$} is the {\em Hochschild cohomology} of~$\bfA$ as a differential graded category. This is a standard definition, see~\cite[5.4]{Keller:ICM}, for instance.

\begin{remark}\label{rem:justification}
Proposition~\ref{prop:identification_1} might suggest that the zeroth Hochschild cohomology of a differential graded category is the ``differential graded center'' of that differential graded category. However, as it stands, this does not have a meaning, because the latter is not defined! (Compare with Remark~\ref{rem:not_defined}.) Instead, the Hochschild cohomology of a differential graded category can be related to the graded center of the derived category by means of the characteristic homomorphism. This will be explained in Section~\ref{sec:char_ss}.
\end{remark}

%%%

%\subsection{Automorphisms and Hochschild cohomology II}

If we are given a graded automorphism~$T$ of the differential graded category~$\bfA$, then we can again introduce subspaces~\hbox{$\rmE_0(\bfA,T)\subseteq\rmE_0(\bfA)$} by requiring that the cochains are~$T$--invariant, up to the natural sign as before in~\eqref{eq:sign_2}. This defines a subcomplex~\hbox{$\CH_{\dg}(\bfA,T)=\Tot(\rmE_0(\bfA,T))$} of~$\Tot(\rmE_0(\bfA))=\CH_{\dg}(\bfA)$, so that we get induced homomorphisms~\hbox{$\HH_{\dg}^s(\bfA,T)\to\HH_{\dg}^s(\bfA)$} between the Hochschild cohomology groups. When the automorphism~$T$ is the parity functor~$\Gr_\bfC$ of~$\bfC$ as in~\eqref{eq:parity}, then we have~\hbox{$\HH_{\dg}^s(\bfA,\Gr_\bfA)=\HH_{\dg}^s(\bfA)$} 
%because already~\hbox{$\rmE_0(\bfA,\Gr_\bfA)\subseteq\rmE_0(\bfA)$}, 
for the same reason as given in Remark~\ref{rem:TGr}.

%%%

\section{The spectral sequences}\label{sec:sss}

Since the Hochschild cohomology of a differential graded category~$\bfA$ with coefficients in an~$(\bfA,\bfA)$--bimodule~$M$ is defined as the cohomology of the~(product) totalization of the double complex~$\rmE_0(\bfA\,;\,M)$ given by
\begin{equation}\label{eq:double}
\rmE_0^{s,t}(\bfA\,;\,M)=\prod_{x_0,\dots,x_s}\Hom( \bfA(x_1,x_0)\otimes\dots\otimes\bfA(x_s,x_{s-1}), M(x_s,x_0) )^t,
\end{equation}
there are automatically two spectral sequences that calculate it. See~\cite[5.6]{Weibel}, for instance, or any other text on homological algebra. These will be discussed in this section in general terms. Concrete examples will be presented in the following sections.

%%%

\subsection{The characteristic spectral sequence}\label{sec:char_ss}

The characteristic spectral sequence is obtained from the double complex~\eqref{eq:double} by calculating the internal (i.e.~with respect to~$t$) differentials first. Since homology commutes with products and we are working over a field, that gives
\[
\rmE_1^{s,t}(\bfA\,;\,M)=\prod_{x_0,\dots,x_s}\Hom(
\rmH_\bullet(\bfA(x_1,x_0))\otimes\dots\otimes\rmH_\bullet(\bfA(x_s,x_{s-1})),\rmH_\bullet(M(x_s,x_0)) 
)^t.
\]
This~$\rmE_1$ page carries a (Hochschild) differential in the~$s$--direction, and its cohomology is the Hochschild cohomology of the graded category~$\rmH_\bullet\bfC$ with coefficients in the bimodule~$\rmH_\bullet M$. So we get:

\begin{theorem}\label{thm:main_ss}
For every differential graded category~$\bfA$ and every~$(\bfA,\bfA)$--bi\-module~$M$ there exists a spectral sequence with~\hbox{$\rmE_2^{s,t}\cong\HH_{\gr}^s(\rmH_\bullet\bfA\,;\,\Sigma^t\rmH_\bullet M)$} and that converges to the Hochschild cohomology~$\HH_{\dg}^{s+t}(\bfA\,;\,M)$ of the differential graded category~$\bfA$ with coefficients in~$M$.
\end{theorem}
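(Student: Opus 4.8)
The plan is to recognize Theorem~\ref{thm:main_ss} as a special case of the standard spectral sequence of a double complex, applied to the bicomplex $\rmE_0(\bfA\,;\,M)$ defined in~\eqref{eq:double}, and to identify the $\rmE_2$ page. Since $\CH_{\dg}(\bfA\,;\,M)$ is by definition the product total complex $\Tot(\rmE_0(\bfA\,;\,M))$, and the double complex lives in the region $s\geq 0$ (arbitrary $t\in\bbZ$), I would first check that the relevant convergence hypotheses are met: for each fixed total degree, the product total complex is a product over the antidiagonal, and because $s$ is bounded below by $0$ this antidiagonal is effectively a half-line, so the filtration by the $s$-degree is exhaustive and the spectral sequence of the first filtration converges to $\HH_{\dg}^{s+t}(\bfA\,;\,M)$. (I would cite~\cite[5.6]{Weibel} for the general machinery, and note that working over a field removes any flatness or $\lim^1$ subtleties beyond the bounded-below issue.)

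Next I would compute the $\rmE_1$ page by taking cohomology in the internal ($t$) direction. The $t$-differential on $\rmE_0^{s,\bullet}(\bfA\,;\,M)$ is the one coming from the differentials on the complexes $\bfA(x_i,x_{i-1})$ and $M(x_s,x_0)$, i.e.\ the differential of the graded-vector-space $\Hom\bigl(\bfA(x_1,x_0)\otimes\dots\otimes\bfA(x_s,x_{s-1}),\,M(x_s,x_0)\bigr)$. Here I invoke the Künneth-type formulas from Section~\ref{sec:gdh}: over a field, homology commutes with $\otimes$ and with $\Hom$ (the two displayed isomorphisms $\rmH_\bullet(X\otimes Y)\cong\rmH_\bullet X\otimes\rmH_\bullet Y$ and $\rmH_\bullet\Hom(X,Y)\cong\Hom(\rmH_\bullet X,\rmH_\bullet Y)$), and homology also commutes with the product over the tuples $x_0,\dots,x_s$. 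This yields exactly
\[
\rmE_1^{s,t}(\bfA\,;\,M)=\prod_{x_0,\dots,x_s}\Hom\bigl(\rmH_\bullet(\bfA(x_1,x_0))\otimes\dots\otimes\rmH_\bullet(\bfA(x_s,x_{s-1})),\,\rmH_\bullet(M(x_s,x_0))\bigr)^t,
\]
as stated before the theorem.

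Then I would observe that the residual differential $d_1$ on the $\rmE_1$ page is, by inspection of the alternating-sum formula for the cosimplicial structure, precisely the Hochschild differential for the graded category $\rmH_\bullet\bfA$ with coefficients in the $(\rmH_\bullet\bfA,\rmH_\bullet\bfA)$-bimodule $\rmH_\bullet M$ — one must check that the cosimplicial structure maps on $\rmE_0$, which are built from composition in $\bfA$ and the bimodule action, descend on homology to composition in $\rmH_\bullet\bfA$ and the $\rmH_\bullet M$-action; this is immediate since $\rmH_\bullet$ is a monoidal functor. Comparing with the definition of $\CH_{\gr}(\rmH_\bullet\bfA\,;\,\rmH_\bullet M)$ in Section~\ref{sec:HM}, and accounting for the internal grading $t$ via the coefficient-shift convention $\HH_{\gr}^{s,t}=\HH_{\gr}^{s,0}(-\,;\,\Sigma^t-)$, we get $\rmE_2^{s,t}\cong\HH_{\gr}^s(\rmH_\bullet\bfA\,;\,\Sigma^t\rmH_\bullet M)$.

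I expect the main obstacle to be bookkeeping rather than anything deep: first, keeping the two gradings (the homological/cosimplicial degree $s$ and the internal degree $t$) straight together with the sign conventions, especially making sure the Koszul signs in the differential on $\Hom$-complexes and in the cosimplicial faces interact so that $d_1$ really is the Hochschild differential on the nose and not up to a sign discrepancy; and second, being careful that the \emph{product} total complex (not the direct-sum one) is what converges, which is why the lower bound $s\geq 0$ is essential. Neither is a serious difficulty, so the proof will be short, amounting to "apply the standard double-complex spectral sequence and identify the pages via the Künneth formulas over a field."
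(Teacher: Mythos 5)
Your proposal follows essentially the same route as the paper: the paper obtains the spectral sequence directly from the double complex~\eqref{eq:double} by computing the internal~$t$--differential first, identifies the~$\rmE_1$ page using the fact that over a field homology commutes with products, tensor products, and~$\Hom$, and then recognizes the~$d_1$--differential as the Hochschild differential of the graded category~$\rmH_\bullet\bfA$ with coefficients in~$\rmH_\bullet M$. The only difference is that you make the convergence of the column filtration on the product total complex explicit, whereas the paper simply cites~\cite[5.6]{Weibel} and leaves that point implicit; your added care is harmless and, if anything, welcome.
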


The spectral sequence in Theorem~\ref{thm:main_ss} will be referred to as the {\it characteristic spectral sequence}. The characteristic spectral sequence is a right-plane spectral sequence of cohomological type, so that~$\rmE_r^{s,t}\to\rmE_r^{s+r,t-r+1}$ is the differential on the~$r$--th page.

The spectral sequence in Theorem~\ref{thm:main_ss} is the differential graded analogue of the spectral sequence derived in~\cite{Szymik} in the (non-linear and unstable) context of simplicial or topological categories, i.e.~categories that have mapping {\em spaces} of morphisms. For any such category~$\bfC$ there is a homotopy coherent center~$\calZ(\bfC)$ which is a space. The (Bousfield--Kan type) spectral sequence in~\cite{Szymik} computes the homotopy groups~$\pi_\bullet\calZ(\bfC)$, and the initial term~$\rmE^2_{0,0}$ receives a map from the center~(in the ordinary sense) of the homotopy category~$\Ho(\bfC)$. The associated edge homomorphism~$\rmZ(\Ho\bfC)\to\pi_0\calZ(\bfC)$ is discussed in~\cite{Szymik} as well. The edge homomorphism that comes with the spectral sequence in Theorem~\ref{thm:main_ss} will be described in the following Section~\ref{sec:char_ss}. Bicategories can be thought of as (very) special simplicial categories, and~\cite{Meir+Szymik} contains a description of the resulting situation in detail.

%%%

Let us spell out the interesting one-object-case of Theorem~\ref{thm:main_ss}:

\vbox{\begin{corollary}\label{cor:dgass}
For every differential graded algebra~$C$ and bimodule~$M$ there is a characteristic spectral sequence with~\hbox{$\rmE_2^{s,t}\cong\HH_{\gr}^s(\rmH_\bullet C\,;\,\Sigma^t\rmH_\bullet M)$} that converges to the Hochschild cohomology~$\HH_{\dg}^{s+t}(C\,;\,M)$ of the differential graded algebra~$C$ with coefficients in~$M$.
\end{corollary}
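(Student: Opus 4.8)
The plan is to obtain the corollary as the one-object specialization of Theorem~\ref{thm:main_ss}, so that the work consists entirely in matching up notation. First I would invoke the dictionary recorded in Section~\ref{sec:cat}: a differential graded algebra~$C$ is the same datum as a differential graded category~$\bfA$ with a single object~$\star$ and~$\bfA(\star,\star)=C$, where the composition~\eqref{eq:composition} becomes the multiplication and~$\id_\star$ the unit. Under this correspondence an~$(\bfA,\bfA)$--bimodule, i.e.\ a graded linear functor~$\bfA^{\op}\otimes\bfA\to\Mod_{\mathfrak{K}}$ compatible with the differentials, is precisely a differential graded~$C$--bimodule~$M$, since~$\bfA^{\op}\otimes\bfA$ has the single object~$(\star,\star)$ with endomorphism differential graded algebra~$C^{\op}\otimes C$. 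No size restriction intervenes, because every object in sight is literally one differential graded vector space.

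Next I would check that each of the three invariants appearing in Theorem~\ref{thm:main_ss} restricts, under this dictionary, to the corresponding invariant of the differential graded algebra. On the abutment side, the bicomplex~\eqref{eq:double} for the one-object category has~$s$--th row~$\Hom(C^{\otimes s},M)$, so that its product totalization is the Hochschild complex~$\CH_{\dg}(C\,;\,M)$ and its cohomology is~$\HH_{\dg}^\bullet(C\,;\,M)$ in the standard sense recalled after Remark~\ref{rem:not_wanted}. On the input side, the homology category~$\rmH_\bullet\bfA$ is the one-object graded category attached to the graded homology algebra~$\rmH_\bullet C$, and~$\rmH_\bullet M$ is the induced~$\rmH_\bullet C$--bimodule; hence the~$\rmE_2$--term~$\HH_{\gr}^s(\rmH_\bullet\bfA\,;\,\Sigma^t\rmH_\bullet M)$ of Theorem~\ref{thm:main_ss} is~$\HH_{\gr}^s(\rmH_\bullet C\,;\,\Sigma^t\rmH_\bullet M)$. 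Applying Theorem~\ref{thm:main_ss} verbatim to~$\bfA$ then yields the asserted spectral sequence, together with its differentials~\hbox{$\rmE_r^{s,t}\to\rmE_r^{s+r,t-r+1}$} and its convergence.

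Since this is a translation of notation rather than a new argument, I expect no genuine mathematical obstacle; the only point needing a line of care is the identification of the category-theoretic bicomplex~\eqref{eq:double} of the one-object category~$\bfA$ with the usual Hochschild bicomplex of the differential graded algebra~$C$, which is immediate once one observes that the index set of~$(s+1)$--tuples of objects collapses to a single point. Finally I would remark, exactly as in the body of the paper, that the whole discussion survives the presence of a graded automorphism~$T$ of~$C$, giving the equivariant refinement with~\hbox{$\rmE_2^{s,t}\cong\HH_{\gr}^s(\rmH_\bullet C,T\,;\,\Sigma^t\rmH_\bullet M)$} abutting to~$\HH_{\dg}^{s+t}(C,T\,;\,M)$.
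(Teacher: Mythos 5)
Your proposal is correct and matches the paper's own treatment: the corollary is stated there simply as the one-object specialization of Theorem~\ref{thm:main_ss}, with the dictionary between differential graded algebras and one-object differential graded categories doing all the work. Your extra care in identifying the bicomplex~\eqref{eq:double} with the usual Hochschild bicomplex of~$C$ is exactly the (implicit) content of the paper's ``spelling out'' of the one-object case.
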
}

In the special case of differential graded algebras, the existence of spectral sequences that compute Hochschild homology is also asserted in~\cite[5.3.4]{Loday}. Another predecessor of the spectral sequence in Corollary~\ref{cor:dgass} originates in algebraic topology, this time in {\em stable} homotopy theory: Differential graded algebras~$C$ naturally give rise to~$\rmA_\infty$ ring spectra~$\rmH C$ over the integral Eilenberg--Mac Lane spectrum~$\rmH\bbZ$, and conversely, in such a way that the homology~$\rmH_\bullet C$ of a differential graded algebra is naturally isomorphic to the homotopy~$\pi_\bullet\rmH C$ of the associated~$\rmA_\infty$ ring spectrum~$\rmH C$. Shipley~\cite{Shipley} actually made more refined statements, but the present summary conveys the idea. In a context closely related to~$\rmA_\infty$ ring spectra, B\"okstedt~(unpublished) has established spectral sequences that compute the~(topological) Hochschild~(co)homology from the~(ordinary) Hochschild~(co)homology of its homotopy ring. See~\cite[Ch.~IX]{EKMM} for more on this.

In Section~\ref{sec:top} we will discuss another interesting very special case of the spectral sequence in Corollary~\ref{cor:dgass}, namely when the differential graded algebras arise as cochain algebras on spaces.

%%%

%\subsection{The characteristic edge homomorphism}\label{sec:char_ss}

We are now going to relate the edge homomorphism of the characteristic spectral sequence in Theorem~\ref{thm:main_ss} to the characteristic homomorphism as studied by Buchweitz and Flenner~\cite{Buchweitz+Flenner}, Lowen~\cite{Lowen}, Linckelmann~\cite[Sec.~2]{Linckelmann}, and Kuri\-ba\-ya\-shi~\cite[Sec.~5]{Kuribayashi}, for instance. Since this comparison will involve the more general setting of differential graded categories~$\bfA$ together with a chosen automorphism~$T$, we note that the construction of the spectral sequence in Theorem~\ref{thm:main_ss} extends easily to this more general situation. We have:

\begin{theorem}\label{thm:identification}
The edge homomorphism of the characteristic spectral sequence in Theorem~\ref{thm:main_ss} is the characteristic homomorphism
\begin{equation}\label{eq:char}
\HH_{\dg}^t(\bfA,T)\longrightarrow\rmZ^t_{\gr}(\rmH_\bullet\bfA,\rmH_\bullet T).
\end{equation}
\end{theorem}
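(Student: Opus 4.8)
The plan is to trace through the construction of the spectral sequence in Theorem~\ref{thm:main_ss} and identify the edge homomorphism explicitly, in the generality of a differential graded category~$\bfA$ with a chosen automorphism~$T$. Recall that the characteristic spectral sequence arises from the double complex~$\rmE_0(\bfA,T)$ (the $T$--invariant subcomplex of~\eqref{eq:double}) by taking the internal differential first. The relevant edge is the one along~$s=0$:~the spectral sequence is right-half-plane of cohomological type, so in total degree~$t$ there is a composite $\HH_{\dg}^t(\bfA,T)=\rmH^t(\Tot\rmE_0(\bfA,T))\twoheadrightarrow\rmE_\infty^{0,t}\hookrightarrow\rmE_2^{0,t}$, and Theorem~\ref{thm:main_ss} (with automorphism) identifies $\rmE_2^{0,t}\cong\HH_{\gr}^0(\rmH_\bullet\bfA,\rmH_\bullet T\,;\,\Sigma^t(\rmH_\bullet\bfA))$. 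By Proposition~\ref{prop:identification_2} this last group is $\rmZ_{\gr}^t(\rmH_\bullet\bfA,\rmH_\bullet T)$. So the content of the theorem is that this canonical edge map agrees, up to the standard identifications, with the characteristic homomorphism of Buchweitz--Flenner et al.

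The key steps, in order, are as follows. First I would write down the edge map concretely at the cochain level: a class in $\HH_{\dg}^t(\bfA,T)$ is represented by a collection of components $\varphi_s\in\rmE_0^{s,t-s}(\bfA,T)$ with the appropriate total-complex cocycle condition; passing to $\rmE_\infty^{0,t}$ and then to $\rmE_2^{0,t}$ amounts to retaining only the component $\varphi_0$, which lives in $\rmE_0^{0,t}(\bfA,T)=\prod_x\bfA(x,x)^t$, and observing that the total cocycle condition forces $\delta\varphi_0=0$ internally and forces $\varphi_0$ to be a Hochschild $0$--cocycle modulo internal differentials --- i.e.\ its homology class is natural in the sense of~\eqref{eq:natural} and compatible with $\rmH_\bullet T$ in the sense of~\eqref{eq:sign_1}. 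Second, I would recall the definition of the characteristic homomorphism used by the cited authors: given a Hochschild class, one realizes it as a natural endotransformation of the identity functor on the derived category (equivalently, as multiplication-by-a-class on each object of $\rmH_\bullet\bfA$), and checks it is central and shift-compatible. Third, I would verify that these two recipes produce the same family $(\Phi_x)$:~the class of $\varphi_0$ evaluated at an object $x$ is literally $[\varphi_0(x)]\in\rmH_t(\bfA(x,x))=(\rmH_\bullet\bfA)(x,x)_t$, and the Hochschild cocycle relation on the $s=1$ component of $\varphi$ translates exactly into the naturality identity $f\cdot[\varphi_0(x)]=(-1)^{|f|t}[\varphi_0(y)]\cdot f$ for $f\colon x\to y$ in $\rmH_\bullet\bfA$, while the $T$--invariance~\eqref{eq:sign_2} of the cochain gives~\eqref{eq:sign_1} for the homology class. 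This matches Proposition~\ref{prop:agrees} and Proposition~\ref{prop:identification_2}.

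The step I expect to be the main obstacle --- really the only non-bookkeeping point --- is reconciling \emph{sign conventions and the choice of automorphism} across the two descriptions. The literature (Buchweitz--Flenner, Linckelmann, Krause--Ye) works with a triangulated category and its shift $\Sigma$, and their ``characteristic homomorphism'' is phrased relative to $\Sigma$; our $\rmE_2$--identification via Proposition~\ref{prop:identification_2} is phrased relative to the abstractly chosen $T$, which for $\bfA$ a category with automorphism becomes $\rmH_\bullet T$, and for $\Per_A$ or $\Per_X$ this is exactly the shift on the perfect derived category. One must check that the Koszul signs appearing in the total-complex differential of $\Tot\rmE_0$, in the Hochschild differential on the $\rmE_1$--page, and in the naturality sign~\eqref{eq:natural} all line up so that the edge map is the characteristic homomorphism \emph{on the nose} and not up to a sign depending on degrees. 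I would handle this by computing the $\rmE_1^{1,t}\to\rmE_1^{0,t}$ incoming differential and the $\rmE_1^{0,t}\to\rmE_1^{1,t}$ outgoing differential explicitly from the cosimplicial structure maps, reading off that a $0$--cocycle is precisely a natural transformation in the sense of~\eqref{eq:natural}, and then invoking Proposition~\ref{prop:identification_2} and the discussion following Proposition~\ref{prop:agrees} to name the result. Once the signs are pinned down, the identification is immediate.
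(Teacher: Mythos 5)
Your proposal is correct and follows the same route as the paper: the paper's proof consists exactly of your first paragraph, namely writing the edge homomorphism as $\HH_{\dg}^t(\bfA,T)\twoheadrightarrow\rmE_\infty^{0,t}\subseteq\rmE_2^{0,t}=\HH_{\gr}^{0,t}(\rmH_\bullet\bfA,\rmH_\bullet T)$ and invoking Proposition~\ref{prop:identification_2} to identify the target with $\rmZ^t_{\gr}(\rmH_\bullet\bfA,\rmH_\bullet T)$. The additional cochain-level verification and sign-matching against the literature's definition of the characteristic homomorphism that you outline is left implicit in the paper, so your write-up is, if anything, more careful than the original.
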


\begin{proof}
For the standard coefficient bimodule~$M=\bfA(\Id_\bfA,\Id_\bfA)=\bfA$ as in Example~\ref{ex:der_nat_trf}, the edge homomorphisms of the spectral sequence
\[
\rmE_2^{s,t}\cong\HH_{\gr}^{s,t}(\rmH_\bullet\bfA,\rmH_\bullet T)\Longrightarrow\HH_{\dg}^{s+t}(\bfA,T)
\]
in Theorem~\ref{thm:main_ss} takes the form
\begin{equation}\label{eq:form_of_edge}
\HH_{\dg}^t(\bfA,T)\twoheadrightarrow\rmE_\infty^{0,t}\subseteq\dots\subseteq\rmE_2^{0,t}=\HH_{\gr}^{0,t}(\rmH_\bullet\bfA,\rmH_\bullet T).
\end{equation}
By Proposition~\ref{prop:identification_2}, the right hand side can be identified with the graded center~\hbox{$\rmZ^t_{\gr}(\rmH_\bullet\bfA,\rmH_\bullet T)=\HH_{\gr}^{0,t}(\rmH_\bullet\bfA,\rmH_\bullet T)$} of the homology category as defined in Section~\ref{sec:HM}. This means that we can identify the edge homomorphisms with the homomorphism~\eqref{eq:char} of graded vector spaces.
\end{proof}

\begin{proposition}\label{prop:surjective}
The characteristic homomorphism~\eqref{eq:char} is surjective whenever the spectral sequence in Theorem~\ref{thm:main_ss} degenerates at the~$\rmE_2$ page.
\end{proposition}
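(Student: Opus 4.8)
The plan is to read off the claim directly from the structure of the edge homomorphism as exhibited in the proof of Theorem~\ref{thm:identification}. Recall from~\eqref{eq:form_of_edge} that for the standard coefficient bimodule the edge homomorphism factors as
\[
\HH_{\dg}^t(\bfA,T)\twoheadrightarrow\rmE_\infty^{0,t}\hookrightarrow\rmE_2^{0,t}=\HH_{\gr}^{0,t}(\rmH_\bullet\bfA,\rmH_\bullet T)=\rmZ^t_{\gr}(\rmH_\bullet\bfA,\rmH_\bullet T),
\]
where the first map is always surjective (it is the canonical projection onto the associated graded quotient $\rmE_\infty^{0,t}$, which since we are on the edge column $s=0$ is a genuine quotient of $\HH_{\dg}^t$) and the second map is the inclusion of $\rmE_\infty^{0,t}$ as a subgroup of $\rmE_2^{0,t}$. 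Therefore the characteristic homomorphism is surjective precisely when this inclusion $\rmE_\infty^{0,t}\subseteq\rmE_2^{0,t}$ is an equality.

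First I would recall that on a right-plane cohomological spectral sequence the only differentials that can affect the $s=0$ column are the outgoing ones $d_r\colon\rmE_r^{0,t}\to\rmE_r^{r,t-r+1}$; there are no incoming differentials into $\rmE_r^{0,t}$ because that would require a source in negative $s$. Hence $\rmE_{r+1}^{0,t}=\ker\bigl(d_r\colon\rmE_r^{0,t}\to\rmE_r^{r,t-r+1}\bigr)$ for every $r\ge 2$, and consequently $\rmE_\infty^{0,t}$ is the intersection of the kernels of all these higher differentials, viewed as a nested chain of subspaces of $\rmE_2^{0,t}$.

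Now I would invoke the degeneration hypothesis: if the spectral sequence degenerates at $\rmE_2$, then all differentials $d_r$ with $r\ge 2$ vanish identically, so in particular every $d_r$ restricted to the $s=0$ column is zero. It follows that $\rmE_\infty^{0,t}=\rmE_2^{0,t}$, the inclusion in~\eqref{eq:form_of_edge} is the identity, and the characteristic homomorphism is the surjection $\HH_{\dg}^t(\bfA,T)\twoheadrightarrow\rmE_\infty^{0,t}=\rmZ^t_{\gr}(\rmH_\bullet\bfA,\rmH_\bullet T)$. This completes the argument; the same reasoning applies to the plain case $T=\Gr_\bfA$ by Remark~\ref{rem:TGr}.

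There is essentially no obstacle here: the statement is a formal consequence of the edge-homomorphism description already established in Theorem~\ref{thm:identification} together with the elementary fact about outgoing differentials on the edge column of a first-quadrant-type spectral sequence. The only point to keep straight is the direction of the inclusion — $\rmE_\infty^{0,t}$ sits inside $\rmE_2^{0,t}$ as a subobject, not a quotient — so that ``degeneration at $\rmE_2$'' closes the gap from below by forcing the subobject to be the whole thing, rather than needing any statement about quotients. One could also note in passing that degeneration at $\rmE_2$ is stronger than needed: it suffices that all the differentials $d_r$ emanating from the single column $s=0$ vanish, which is a strictly weaker condition.
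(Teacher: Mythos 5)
Your argument is correct and is essentially the paper's own proof: both read the claim off from the factorization~\eqref{eq:form_of_edge} and observe that degeneration at~$\rmE_2$ forces the inclusion~$\rmE_\infty^{0,t}\subseteq\rmE_2^{0,t}$ to be an equality. Your added remarks (that only outgoing differentials touch the~$s=0$ column, and that vanishing of those alone would suffice) are sound but not needed beyond what the paper states.
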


\begin{proof}
This is an immediate consequence of~\eqref{eq:form_of_edge}: If the differentials are trivial, then~$\rmE_\infty^{0,t}=\dots=\rmE_2^{0,t}$, and we get
\[
\HH_{\dg}^t(\bfA,T)\twoheadrightarrow\rmE_\infty^{0,t}=\rmE_2^{0,t}=\HH_{\gr}^{0,t}(\rmH_\bullet\bfA,\rmH_\bullet T),
\]
as claimed.
\end{proof}

We will see later in examples that the hypothesis in Proposition~\ref{prop:surjective} is satisfied in substantial classes of examples.

%%%

\subsection{The forgetful spectral sequence}\label{sec:forgetful_ss}

Let~$\bfA$ be a differential graded category. The forgetful spectral sequence is the spectral sequence that we obtain from the double complex~\eqref{eq:double} by calculating the differential in the Hochschild direction (that is the~$s$--direction) first. In other words, this boils down to forgetting the differential of~$\bfA$ for a while, treating it simply as a graded category. Thus, the~$\rmE_0$ page is now the structure mentioned in Remark~\ref{rem:not_wanted}, and the~$\rmE_1$ page is its cohomology, the graded Hochschild cohomology groups~$\HH_{\gr}^s(\bfA)$. These are still differential graded vector spaces, so they come with differentials (in the internal~$t$--direction). The cohomology gives rise to the~$\rmE_2$ page of the forgetful spectral sequence. The coordinate transformation~\hbox{$(s,t)=(q,p)$} leads again to a spectral sequence of standard cohomological type, i.e.~such that the differentials on the~$\rmE_r$ page are of the form~\hbox{$\rmE_r^{p,q}\to\rmE_r^{p+r,q-r+1}$}. So we get:

\vbox{\begin{theorem}\label{thm:forgetful_ss}
For every differential graded category~$\bfA$ and~$(\bfA,\bfA)$--bimodule~$M$ there is a spectral sequence of cohomological type with~\hbox{$\rmE_2^{p,q}\cong\rmH^p(\HH_{\gr}^q(\bfA\,;\,M))$} that converges to the Hochschild cohomology~$\HH_{\dg}^{p+q}(\bfA\,;\,M)$ of the differential graded category of~$\bfA$ with coefficients in~$M$.
\end{theorem}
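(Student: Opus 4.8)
The plan is to recognise the spectral sequence of Theorem~\ref{thm:forgetful_ss} as the \emph{second} of the two spectral sequences attached to the bicomplex~$\rmE_0(\bfA\,;\,M)$ of~\eqref{eq:double}: the one obtained by running the cosimplicial (Hochschild) differential in the~$s$--direction \emph{before} the internal differential in the~$t$--direction. Recall that, by definition,~$\CH_{\dg}(\bfA\,;\,M)=\Tot(\rmE_0(\bfA\,;\,M))$ is the product total complex of this bicomplex, with columns indexed by the Hochschild degree~$s\geq 0$ and rows by the internal degree~$t\in\bbZ$, and that~$\HH_{\dg}^n(\bfA\,;\,M)$ is its cohomology. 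So I would invoke the standard construction of the two spectral sequences of a bicomplex, see~\cite[5.6]{Weibel}, this time for the filtration by the internal degree, and then identify the first two pages.

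The identification of the pages is precisely what Remark~\ref{rem:not_wanted} prepares for. Here~$d_0$ is the Hochschild (cosimplicial) coboundary; along this differential the~$\rmE_0$ page is, in each internal degree~$t$, the Hochschild cochain complex~$\CH_{\gr}(\bfA\,;\,M)$ of~$\bfA$ regarded merely as a \emph{graded} category --- the ``too naive'' object of that remark. Taking cohomology along~$d_0$ therefore yields, straight from the definition of the Hochschild cohomology of a graded category, the graded vector space~$\HH_{\gr}^s(\bfA\,;\,M)$, with internal degree recorded by~$t$. Because the Hochschild coboundary commutes with the internal differentials induced by those of~$\bfA$ and~$M$, these groups retain an internal differential --- they are still differential graded vector spaces --- and~$d_1$ \emph{is} this internal differential; hence the~$\rmE_2$ term in bidegree (internal degree~$t$, Hochschild degree~$s$) is~$\rmH^t(\HH_{\gr}^s(\bfA\,;\,M))$. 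The coordinate change~$(s,t)=(q,p)$ turns this into a spectral sequence of cohomological type with~$\rmE_2^{p,q}\cong\rmH^p(\HH_{\gr}^q(\bfA\,;\,M))$ and differentials~$\rmE_r^{p,q}\to\rmE_r^{p+r,q-r+1}$, as stated. (The edge homomorphism advertised in the introduction then reads off from Proposition~\ref{prop:identification_1} applied to~$\bfA$ viewed as a graded category.)

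The delicate point, which I expect to be the main obstacle, is convergence: we are using the \emph{product} total complex and the internal degree~$t$ is a priori unbounded below, so the internal-degree filtration is not exhaustive in the naive sense. What rescues the argument is that the product makes that filtration complete and Hausdorff, while the right-half-plane condition~$s\geq 0$ bounds the internal degree from above in each total degree (since~$t=n-s\leq n$) and forces the differential out of any fixed bidegree to vanish from the~$(q+2)$--nd page onward; consequently the towers~$\rmE_{q+2}^{p,q}\twoheadrightarrow\rmE_{q+3}^{p,q}\twoheadrightarrow\cdots$ are towers of epimorphisms, hence Mittag--Leffler, and one deduces strong convergence to~$\HH_{\dg}^{p+q}(\bfA\,;\,M)$ from the standard convergence theory for filtered complexes, see~\cite[5.5]{Weibel}. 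The remaining verifications --- that~$d_0$ really is the Hochschild coboundary and that it commutes with the internal differentials, with the correct Koszul signs --- are routine manipulations of the cosimplicial structure maps, which I would not spell out.
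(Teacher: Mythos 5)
Your proposal follows essentially the same route as the paper: Theorem~\ref{thm:forgetful_ss} is obtained there precisely as the second spectral sequence of the bicomplex~\eqref{eq:double}, running the Hochschild differential first, identifying the~$\rmE_1$ page with the graded Hochschild cohomology~$\HH_{\gr}^s(\bfA\,;\,M)$ together with its residual internal differential, and applying the coordinate change~$(s,t)=(q,p)$, with convergence delegated to the standard reference~\cite[5.6]{Weibel}. Your extra paragraph on convergence for the product total complex (completeness of the internal-degree filtration together with the eventual vanishing of outgoing differentials, giving towers of epimorphisms) is more careful than what the paper records on this point, but it supplements rather than replaces the paper's argument.
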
}

The spectral sequence in Theorem~\ref{thm:forgetful_ss} will be referred to as the {\it forgetful spectral sequence}. In the special one-object-case we obtain again an interesting particular result:

\begin{corollary}\label{cor:forgetful_ss}
For every differential graded algebra~$C$ and bimodule~$M$ there is a spectral sequence of cohomological type with~\hbox{$\rmE_2^{p,q}\cong\rmH^p(\HH_{\gr}^q(C\,;\,M))$} that converges to the Hochschild cohomology~$\HH_{\dg}^{p+q}(C\,;\,M)$ of~$C$ with coefficients in the bimodule~$M$.
\end{corollary}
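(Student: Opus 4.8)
The plan is to deduce Corollary~\ref{cor:forgetful_ss} from Theorem~\ref{thm:forgetful_ss} by specializing the differential graded category~$\bfA$ to the one-object case. Recall from Section~\ref{sec:cat} that a differential graded category~$\bfA$ with a single object~$\star$ is essentially the same data as the differential graded algebra~$C=\bfA(\star,\star)$, and that a~$(\bfA,\bfA)$--bimodule in the sense of Section~\ref{sec:HM} restricts, in the one-object case, to precisely a differential graded~$(C,C)$--bimodule~$M=M(\star,\star)$. Under this dictionary the double complex~\eqref{eq:double} becomes the usual bicomplex
\[
\rmE_0^{s,t}(C\,;\,M)=\Hom(C^{\otimes s},M)^t,
\]
whose (product) totalization is, by definition, the Hochschild complex~$\CH_{\dg}(C\,;\,M)$ computing~$\HH_{\dg}^\bullet(C\,;\,M)$.

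First I would invoke Theorem~\ref{thm:forgetful_ss} verbatim for this~$\bfA$, obtaining a cohomological-type spectral sequence with~$\rmE_2^{p,q}\cong\rmH^p(\HH_{\gr}^q(\bfA\,;\,M))$ converging to~$\HH_{\dg}^{p+q}(\bfA\,;\,M)$. It then remains only to translate each term through the one-object dictionary. The target~$\HH_{\dg}^{p+q}(\bfA\,;\,M)$ is~$\HH_{\dg}^{p+q}(C\,;\,M)$ by the definition of the Hochschild cohomology of a differential graded algebra as recalled in Section~\ref{sec:HM}. For the~$\rmE_2$ term, the inner invariant~$\HH_{\gr}^q(\bfA\,;\,M)$ is the Hochschild cohomology of the underlying graded category, which in the one-object case is the Hochschild cohomology~$\HH_{\gr}^q(C\,;\,M)$ of the graded algebra obtained from~$C$ by forgetting its differential (this is precisely the~$\rmE_1$ page of the forgetful spectral sequence described just before Theorem~\ref{thm:forgetful_ss}). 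Applying the internal differential and taking~$\rmH^p$ gives~$\rmH^p(\HH_{\gr}^q(C\,;\,M))$, as required. Convergence is inherited directly from Theorem~\ref{thm:forgetful_ss}, since the underlying bicomplexes are literally identified.

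There is essentially no obstacle here: the corollary is a pure specialization, and the only thing to check with any care is that the two candidate notions of ``bimodule'' agree in the one-object case, i.e.\ that a graded linear functor~$\bfC^{\op}\otimes\bfC\to\Mod_\mathfrak{K}$ out of a one-object category is the same as a module over the endomorphism algebra acting on two sides. This is immediate from unwinding the definitions, and it has in effect already been used implicitly in Corollary~\ref{cor:dgass}. I would therefore keep the proof to a single sentence, remarking only that this is the case of Theorem~\ref{thm:forgetful_ss} where~$\bfA$ has a single object~$\star$ with~$\bfA(\star,\star)=C$.

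\begin{proof}
This is the special case of Theorem~\ref{thm:forgetful_ss} in which the differential graded category~$\bfA$ has a single object~$\star$ with endomorphism differential graded algebra~$\bfA(\star,\star)=C$, so that an~$(\bfA,\bfA)$--bimodule is the same as a~$(C,C)$--bimodule and the bicomplex~\eqref{eq:double} is the usual Hochschild bicomplex~$\Hom(C^{\otimes s},M)^t$ of~$C$ with coefficients in~$M$.
\end{proof}
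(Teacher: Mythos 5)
Your proof is correct and matches the paper's treatment: the paper states this corollary as the immediate one-object specialization of Theorem~\ref{thm:forgetful_ss}, offering no further argument. Your extra remark identifying $(\bfA,\bfA)$--bimodules with $(C,C)$--bimodules in the one-object case is a reasonable thing to make explicit, but it adds nothing beyond what the paper takes as understood.
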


\begin{remark}
Weibel~\cite[9.9.1]{Weibel} has some information about a homological analogue of the spectral sequence in Corollary~\ref{cor:forgetful_ss}.
\end{remark}

The forgetful spectral sequence in Corollary~\ref{cor:forgetful_ss} can be used in the following way: Given any differential graded algebra~$C$, or any graded algebra~$C$ with the trivial differential, we can find a differential graded algebra~$F$ together with a quasi-isomorphism~(homology equivalence)~\hbox{$C\leftarrow F$}, and such that~$F$ is free as a graded algebra, i.e.~$F=\rmT V$ is the tensor algebra of some graded vector space~$V$. The first of these properties ensures that the differential graded algebras~$C$ and~$F$ have isomorphic Hochschild cohomologies, compare~\cite[5.3.5]{Loday}. The Hochschild cohomology of the graded algebra~$\rmT V$ (i.e.~ignoring the differential for a while) is easy to compute. There is a resolution
\[
0
\longleftarrow \rmT V
\longleftarrow \rmT V\otimes\rmT V
\longleftarrow \rmT V\otimes V\otimes\rmT V
\longleftarrow 0
\]
of the~$(\rmT V,\rmT V)$--bimodule~$\rmT V$, see~\cite[p.~582]{Loday+Quillen}, for instance. Application of the hom functor~$\Hom(\,?\,,\rmT V)$ in the category of~$(\rmT V,\rmT V)$--bimodules yields the Hochschild cohomology as the cohomology of the complex~\hbox{$\rmT V\longrightarrow\Der(\rmT V,\rmT V)$}, where the differential sends an element~$a$ to its inner derivation~$\rmD_a$. This complex is obviously concentrated in degrees~$q=0$ and~$q=1$, and we get, of course,~\hbox{$\HH^0(\rmT V)=\rmZ(\rmT V)$} and~\hbox{$\HH^1(\rmT V)=\Out(\rmT V)$}, where~$\Out(\rmT V)$ are the outer derivations of the algebra~$\rmT V$, and with differentials inherited from~$\rmT V$ (now remembering them again). All of the higher Hochschild cohomology groups are zero. The forgetful spectral sequence in Corollary~\ref{cor:forgetful_ss} degenerates at~$\rmE_2$ to give short exact sequences
\begin{equation}\label{eq:ses}
0
\longrightarrow\rmH^p(\rmZ(\rmT V))
\longrightarrow\HH^p(C)
\longrightarrow\rmH^{p-1}(\Out(\rmT V))
\longrightarrow0.
\end{equation}
It turns out that the injection on the left hand side of~\ref{eq:ses} is an instance of the forgetful edge homomorphism that we will discuss now and that is always injective when the forgetful spectral sequence degenerates on the~$\rmE_2$ page, see Proposition~\ref{prop:injective} below. 

\begin{remark}
It would be interesting to see if the surjections on the right hand side of~\ref{eq:ses} preserve the natural Lie algebra structures that live on the source and the target: Hochschild cohomology has a Gerstenhaber bracket, and~$\rmH^{p-1}(\Out(\rmT V))$ inherits a Lie algebra structure from the~(outer) derivations.
\end{remark}

%%%

%\subsection{The forgetful edge homomorphism}\label{sec:forgetful_ss}

Let us consider the forgetful spectral sequence of Theorem~\ref{thm:forgetful_ss} in the case of the standard bimodule~$M=\bfA(\Id_\bfA,\Id_\bfA)=\bfA$ as in Example~\ref{ex:der_nat_trf}, so that it takes the form
\begin{equation}\label{eq:edge}
\rmE_2^{p,q}\cong\rmH^p(\HH_{\gr}^q(\bfA))\Longrightarrow\HH_{\dg}^{p+q}(\bfA).
\end{equation}
In this case, its edge homomorphism is
\[
\rmH^p(\HH_{\gr}^0(\bfA))=\rmE_2^{p,0}\twoheadrightarrow\cdots\twoheadrightarrow\rmE_\infty^{p,0}\subseteq\HH^p_{\dg}(\bfA).
\]

Since~$\HH_{\gr}^0(\bfA)\cong\rmZ_{\gr}^0(\bfA)$ by Proposition~\ref{prop:identification_2}, the left hand side is just the cohomology of the differential graded vector space~$\rmZ_{\gr}^0(\bfA)$, 
so that the edge homomorphism takes the form~\hbox{$\rmH^p(\rmZ_{\gr}(\bfA))\to\HH_{\dg}^p(\bfA)$}. Similarly to Proposition~\ref{prop:surjective} we get:

\begin{proposition}\label{prop:injective}
The edge homomorphism~\hbox{$\rmH^p(\rmZ_{\gr}(\bfA))\to\HH^p_{\dg}(\bfA)$} is an injection whenever the spectral sequence in Theorem~\ref{thm:forgetful_ss} degenerates at the~$\rmE_2$ page.
\end{proposition}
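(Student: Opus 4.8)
The plan is to argue exactly as in the proof of Proposition~\ref{prop:surjective}, but now paying attention to the bottom row $q=0$ of a right-plane cohomological spectral sequence rather than the left column $s=0$. First I would recall that the forgetful spectral sequence~\eqref{eq:edge} is of standard cohomological type, so that the differential on the $\rmE_r$ page has the shape~\hbox{$\rmE_r^{p,q}\to\rmE_r^{p+r,q-r+1}$}. Looking at the row $q=0$, any differential \emph{leaving} a class in $\rmE_r^{p,0}$ lands in $\rmE_r^{p+r,-r+1}$, which sits below the $q=0$ row for every $r\ge 2$ and is therefore zero. Consequently the only differentials that can affect $\rmE_r^{p,0}$ are those \emph{entering} it, namely~\hbox{$\rmE_r^{p-r,r-1}\to\rmE_r^{p,0}$}, and the successive pages along the bottom row are computed as quotients
\[
\rmE_2^{p,0}\twoheadrightarrow\rmE_3^{p,0}\twoheadrightarrow\cdots\twoheadrightarrow\rmE_\infty^{p,0}.
\]
This is precisely the first half of the edge homomorphism, and $\rmE_\infty^{p,0}$ is then a subobject (the bottom filtration quotient) of $\HH^p_{\dg}(\bfA)$; composing gives the edge homomorphism~\hbox{$\rmH^p(\rmZ_{\gr}(\bfA))=\rmE_2^{p,0}\to\HH^p_{\dg}(\bfA)$}, using the identification $\HH_{\gr}^0(\bfA)\cong\rmZ_{\gr}^0(\bfA)$ from Proposition~\ref{prop:identification_2} together with the fact that $\rmE_2^{p,0}=\rmH^p(\HH_{\gr}^0(\bfA))$ is the cohomology of the differential graded vector space $\rmZ_{\gr}^\bullet(\bfA)$.

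Next I would feed in the degeneration hypothesis. If the spectral sequence degenerates at $\rmE_2$, then all higher differentials vanish, so in particular the differentials $\rmE_r^{p-r,r-1}\to\rmE_r^{p,0}$ are zero for all $r\ge2$. Hence each of the quotient maps in the chain above is an isomorphism, and $\rmE_2^{p,0}=\rmE_\infty^{p,0}$. Since $\rmE_\infty^{p,0}$ is a subobject of $\HH^p_{\dg}(\bfA)$ and the edge homomorphism is, by construction, the composite $\rmE_2^{p,0}\xrightarrow{\ \cong\ }\rmE_\infty^{p,0}\hookrightarrow\HH^p_{\dg}(\bfA)$, it follows that the edge homomorphism is injective. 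This is the statement.

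There is really no serious obstacle here: the argument is formal, and the only point requiring a small amount of care is the bookkeeping of which edge of a \emph{right-plane} spectral sequence plays the role of the injective edge. Because the roles of $s$ and $t$ were swapped via the coordinate transformation~\hbox{$(s,t)=(q,p)$}, the column that was ``internal'' for the characteristic spectral sequence has become the ``Hochschild'' direction here, and it is the $q=0$ \emph{row} (not a column) whose successive pages form surjections. Once this is pinned down, the conclusion follows from the same filtration-quotient reasoning as in Proposition~\ref{prop:surjective}, with ``quotient'' and ``surjection'' replaced by ``subobject'' and ``injection.''

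\begin{proof}
By the coordinate transformation~\hbox{$(s,t)=(q,p)$}, the spectral sequence~\eqref{eq:edge} is of standard cohomological type with differentials~\hbox{$\rmE_r^{p,q}\to\rmE_r^{p+r,q-r+1}$}. For a class in the bottom row $q=0$, any differential leaving it lands in $\rmE_r^{p+r,1-r}$, which vanishes for $r\ge2$; hence, along the bottom row, passing from $\rmE_r$ to $\rmE_{r+1}$ amounts to taking the quotient by the image of the incoming differential~\hbox{$\rmE_r^{p-r,r-1}\to\rmE_r^{p,0}$}. This yields a chain of surjections~\hbox{$\rmE_2^{p,0}\twoheadrightarrow\cdots\twoheadrightarrow\rmE_\infty^{p,0}$}, and $\rmE_\infty^{p,0}$ is the bottom filtration quotient of $\HH^p_{\dg}(\bfA)$, hence a subspace of it. Using Proposition~\ref{prop:identification_2}, which gives $\HH_{\gr}^0(\bfA)\cong\rmZ_{\gr}^0(\bfA)$, we have~\hbox{$\rmE_2^{p,0}=\rmH^p(\HH_{\gr}^0(\bfA))\cong\rmH^p(\rmZ_{\gr}(\bfA))$}, and the edge homomorphism is the composite~\hbox{$\rmH^p(\rmZ_{\gr}(\bfA))\cong\rmE_2^{p,0}\twoheadrightarrow\rmE_\infty^{p,0}\hookrightarrow\HH^p_{\dg}(\bfA)$}.

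If the spectral sequence degenerates at $\rmE_2$, then all higher differentials vanish, so each surjection in the chain above is an isomorphism and~\hbox{$\rmE_2^{p,0}=\rmE_\infty^{p,0}$}. The edge homomorphism is then the composite of this isomorphism with the inclusion~\hbox{$\rmE_\infty^{p,0}\hookrightarrow\HH^p_{\dg}(\bfA)$}, and is therefore injective.
\end{proof}
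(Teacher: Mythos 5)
Your argument is correct and is essentially the paper's own: the paper records the factorization $\rmE_2^{p,0}\twoheadrightarrow\cdots\twoheadrightarrow\rmE_\infty^{p,0}\subseteq\HH^p_{\dg}(\bfA)$ of the forgetful edge homomorphism just before the proposition and then deduces injectivity from degeneration exactly as you do, only leaving the details implicit with the phrase ``similarly to Proposition~\ref{prop:surjective}.'' Your extra bookkeeping about the vanishing of outgoing differentials from the $q=0$ row is a harmless elaboration of the same reasoning.
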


\begin{remark}\label{rem:hlp}
The question if this forgetful edge homomorphism is an isomorphism is one form of the homotopy limit problem (in the sense of Thomason~\cite{Thomason} and Carlsson~\cite{Carlsson}) which in this case compares the cohomology of the strict center with the cohomology of the space of derived natural transformations.
\end{remark}

\begin{remark}\label{rem:no_auto_here}
The discussion in this section extends again easily to the more general situation when an automorphism acts on the differential graded category, as has been the case in Sections~\ref{sec:char_ss}. We can then interprete the discussion above as the special case when the automorphism is given by the parity functor~\eqref{eq:parity}. We have not done so because we will not use this extension in the rest of this paper.
\end{remark}

%%%

\section{Modules over the dual numbers}\label{sec:alg}

In this section we consider an extended algebraic example. It concerns a differential graded category~$\bfA=\Per_A$ of perfect chain complexes over an algebra~$A$ which can be thought of as a differential graded algebra concentrated in degree zero. Note that these modules are automatically free when the algebra~$A$ is local. 
The shift-invariant Hochschild cohomology of~$\bfA$ is isomorphic to the usual Hochschild cohomology of the algebra~$A$, or~\hbox{$\HH_{\dg}^\bullet(\Per_A,\Sigma)\cong\HH^\bullet(A)$} in symbols. Proposition~\ref{prop:shift} and its proof demonstrate the necessity of working shift-invariantly. The result is well-known for standard Hochschild cohomology of differential graded categories. It follows from the fact that~$A$ is a generator of~$\Per_A$ in a suitable sense, just as the perfect derived category is the thick subcategory of the derived category generated by~$A$. See Keller's and To\"en's Morita theory in~\cite{Keller:ENS, Keller:ICM, Toen} and the references therein, for instance, or~\cite[Prop.~4.9, Thm.~5.4]{Dwyer+Szymik} for a homotopical version. 

%or~\cite[5.3.1, 5.5.1]{Lowen+vandenBergh}

%\subsection{The algebra of dual numbers}

We now choose~$A$ to be the (commutative, local) algebra~\hbox{$A=\mathfrak{K}[\epsilon]/(\epsilon^2)$} of dual numbers, where~$\mathfrak{K}$ is our ground field, and~$\epsilon$ has degree~$0$. It will make a difference whether the characteristic of the ground field is different from~$2$, so that the derivative~$2\epsilon$ of~$\epsilon^2$ with respect to~$\epsilon$ is non-zero, or not.
Let~$\bfA=\Per_{\mathfrak{K}[\epsilon]/(\epsilon^2)}$ be the differential graded category of perfect chain complexes of finitely generated projective~$\mathfrak{K}[\epsilon]/(\epsilon^2)$--modules. Remember that these modules are automatically free since the algebra~$\mathfrak{K}[\epsilon]/(\epsilon^2)$ is local. Examples are the complexes
\begin{equation}\label{ex:dual}
0\longleftarrow \mathfrak{K}[\epsilon]/(\epsilon^2)
\overset{\epsilon}{\longleftarrow} \mathfrak{K}[\epsilon]/(\epsilon^2)
\overset{\epsilon}{\longleftarrow}\cdots
\overset{\epsilon}{\longleftarrow} \mathfrak{K}[\epsilon]/(\epsilon^2)
\longleftarrow 0
\end{equation}
where the modules~$\mathfrak{K}[\epsilon]/(\epsilon^2)$ live in the degrees from~$m$ to~$n$, for some pair of integers~$m\leqslant n$. The category~$\Per_{\mathfrak{K}[\epsilon]/(\epsilon^2)}$ comes with the usual shift-automorphism~$\Sigma$ for differential graded modules.
%
%%%
%
%\subsection{Hochschild cohomology for the dual numbers}
%
The Hochschild cohomology of the algebra~$\mathfrak{K}[\epsilon]/(\epsilon^2)$ is well-known. It follows from~\cite{BuenosAires} that the cochain complex~$\CH(\mathfrak{K}[\epsilon]/(\epsilon^2))$ is equivalent to
\[
\mathfrak{K}[\epsilon]/(\epsilon^2)
\overset{0}{\longrightarrow} \mathfrak{K}[\epsilon]/(\epsilon^2)
\overset{2\epsilon}{\longrightarrow} \mathfrak{K}[\epsilon]/(\epsilon^2)
\overset{0}{\longrightarrow}\cdots
\overset{2\epsilon}{\longrightarrow} \mathfrak{K}[\epsilon]/(\epsilon^2)
\longrightarrow\cdots,
\]
so we can read off that
\[
\dim\HH^t(\mathfrak{K}[\epsilon]/(\epsilon^2))=
\begin{cases}
2 & t=0\\
1 & t>0
\end{cases}
\]
in the case when the characteristic of our ground field is different from~$2$. The degree~$0$ part is the (commutative) algebra~$\mathfrak{K}[\epsilon]/(\epsilon^2)$ itself, of course, and the degree~$1$ part is given by the (outer) derivation~$\mathfrak{K}[\epsilon]/(\epsilon^2)\to \mathfrak{K}$ that sends the element~\hbox{$a+\epsilon b$} to~$b$. If the characteristic of our ground field is~$2$, then~$\dim\HH^\bullet(\mathfrak{K}[\epsilon]/(\epsilon^2))=2$ in all non-negative degrees. For the following discussion, it is only important to know that the shift-invariant Hochschild cohomology~$\HH_{\dg}^t(\Per_{\mathfrak{K}[\epsilon]/(\epsilon^2)},\Sigma)$ of the differential graded category~$\Per_{\mathfrak{K}[\epsilon]/(\epsilon^2)}$ is finite-dimensional and non-trivial in all degrees~$t\geqslant0$.
%
%%%
%
%\subsection{The graded center for the dual numbers}
%
As in Section~\ref{sec:cat}, the homology category~$\rmH_\bullet\Per_{\mathfrak{K}[\epsilon]/(\epsilon^2)}$ of the differential graded category~$\Per_{\mathfrak{K}[\epsilon]/(\epsilon^2)}$ in question is nothing but the perfect derived category~$\Dper_{\mathfrak{K}[\epsilon]/(\epsilon^2)}$ of the algebra~$\mathfrak{K}[\epsilon]/(\epsilon^2)$, the derived category of perfect complexes over the algebra~$\mathfrak{K}[\epsilon]/(\epsilon^2)$, together with the canonical shift automorphism. Its graded center has been described in detail by Krause and Ye~\cite[Sec.~5.4]{Krause+Ye} as a square-zero-extension
\[
\rmZ_{\gr}^\bullet(\Dper_{\mathfrak{K}[\epsilon]/(\epsilon^2)},\Sigma)\cong \mathfrak{K}[\zeta]\oplus(\prod_{d=0}^\infty \mathfrak{K}).
\]
The class~$\zeta$ is of degree~$2$ unless the characteristic of our ground field is~$2$, in which case it is of degree~$1$. We get
\[
\dim\rmZ_{\gr}^t(\Dper_{\mathfrak{K}[\epsilon]/(\epsilon^2)},\Sigma)=
\begin{cases}
\infty & t=0\\
0 & t>0\text{ odd}\\
1 & t>0\text{ even}
\end{cases}
\]
if the characteristic of the ground field is not~$2$, and~\hbox{$\dim\rmZ_{\gr}^t(\Dper_{\mathfrak{K}[\epsilon]/(\epsilon^2)},\Sigma)=1$} in all positive degrees if the characteristic of our ground field is~$2$.
%
%%%
%
%\subsection{The edge homomorphism for the dual numbers}
%
The calculations above allow us to deduce the behavior of the characteristic edge homomorphism in the spectral sequence.

\begin{proposition}
For the differential graded category of perfect complexes over the algebra of dual numbers, the characteristic edge homomorphism is not surjective~(in degree~$0$) and not injective (in positive degrees).
\end{proposition}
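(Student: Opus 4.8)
The plan is to derive both non-properties directly from the dimension counts assembled above, using only the identification of the edge homomorphism from Theorem~\ref{thm:identification}. First I would set $\bfA=\Per_{\mathfrak{K}[\epsilon]/(\epsilon^2)}$ with its shift automorphism $T=\Sigma$. Since the homology category of $\Per_{\mathfrak{K}[\epsilon]/(\epsilon^2)}$ with the induced automorphism is the perfect derived category $\Dper_{\mathfrak{K}[\epsilon]/(\epsilon^2)}$ with its canonical shift, Theorem~\ref{thm:identification} identifies the characteristic edge homomorphism with
\[
\HH_{\dg}^t(\Per_{\mathfrak{K}[\epsilon]/(\epsilon^2)},\Sigma)\longrightarrow\rmZ^t_{\gr}(\Dper_{\mathfrak{K}[\epsilon]/(\epsilon^2)},\Sigma).
\]
Its source is $\HH^t(\mathfrak{K}[\epsilon]/(\epsilon^2))$, because $\HH_{\dg}^\bullet(\Per_A,\Sigma)\cong\HH^\bullet(A)$ for any algebra $A$ concentrated in degree~$0$, and its target has been described by Krause and Ye. So it will be enough to compare dimensions on the two sides.

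For the failure of surjectivity I would look at degree $t=0$. There the source is $\HH^0(\mathfrak{K}[\epsilon]/(\epsilon^2))\cong\mathfrak{K}[\epsilon]/(\epsilon^2)$, a two-dimensional $\mathfrak{K}$--vector space, whereas the target $\rmZ^0_{\gr}(\Dper_{\mathfrak{K}[\epsilon]/(\epsilon^2)},\Sigma)$ is infinite-dimensional by the Krause--Ye computation recalled above. Since a linear map out of a finite-dimensional vector space has finite-dimensional image, the edge homomorphism cannot be surjective in degree~$0$; in particular, by Proposition~\ref{prop:surjective}, the characteristic spectral sequence for this $\bfA$ does not degenerate at the $\rmE_2$ page.

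For the failure of injectivity I would compare dimensions in a suitable positive degree, splitting into cases. If $\ch\mathfrak{K}\neq 2$, choose any odd $t>0$: then the source $\HH^t(\mathfrak{K}[\epsilon]/(\epsilon^2))$ is one-dimensional while the target $\rmZ^t_{\gr}(\Dper_{\mathfrak{K}[\epsilon]/(\epsilon^2)},\Sigma)$ vanishes, so the entire one-dimensional source lies in the kernel. If $\ch\mathfrak{K}= 2$, choose any $t>0$: then the source has dimension~$2$ and the target has dimension~$1$, so the kernel is at least one-dimensional. Either way the edge homomorphism is not injective.

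The main obstacle, such as it is, lies not in the argument itself but in making sure the inputs are invoked correctly: one must use Theorem~\ref{thm:identification} in its automorphism-equivariant form with $T=\Sigma$, since it is the shift-invariance that makes the source agree with the ordinary Hochschild cohomology of the algebra of dual numbers, as Proposition~\ref{prop:shift} and its proof illustrate. I would also note that this argument merely records the non-vanishing of the cokernel in degree~$0$ and of the kernel in the relevant positive degrees; it does not attempt to identify these spaces, which the statement does not require.
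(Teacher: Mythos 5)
Your proposal is correct and follows essentially the same route as the paper's own proof: both arguments compare the known dimensions of $\HH^t(\mathfrak{K}[\epsilon]/(\epsilon^2))$ with the Krause--Ye computation of $\rmZ^t_{\gr}(\Dper_{\mathfrak{K}[\epsilon]/(\epsilon^2)},\Sigma)$, deducing non-surjectivity in degree~$0$ from finite versus infinite dimension and non-injectivity in positive degrees from the source being strictly larger than the target there. Your write-up is if anything slightly more careful than the paper's, in that it makes the case split on the characteristic and the choice of odd positive degree explicit.
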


\begin{proof}
The graded center of the perfect derived category of the algebra~$\mathfrak{K}[\epsilon]/(\epsilon^2)$ is infinite dimensional in degree~$0$, but the~$0$--th Hochschild cohomology is the finite dimensional algebra~$\mathfrak{K}[\epsilon]/(\epsilon^2)$ itself. It follows that the characteristic edge homomorphism can never be surjective (in degree~$0$). In other words, there are obstructions to lifting classes from the graded center into Hochschild cohomology in the form of differentials supported on these classes. The claim about non-injectivity also follows immediately from the dimension counts above. This means that the spectral sequence contains elements in the spots~$\rmE_2^{s,t}$ for~$s\geqslant1$ that survive the spectral sequence.
\end{proof}

%%%

\section{Coherent sheaves over algebraic curves}\label{sec:geo}

In this section we discuss geometric examples where the characteristic homomorphism is not injective or not surjective from the point of view of the spectral sequence in Theorem~\ref{thm:main_ss}.
%
%Let~$X$ be a smooth algebraic curve over an algebraically closed field, and let~$\Dper_X$ denote the bounded derived category of coherent sheaves over~$X$. Note that in the smooth case bounded complexes and perfect complexes are equivalent.
%
Let~$X$ be a smooth algebraic curve over an algebraically closed field. In the smooth case, every coherent sheaf has a finite resolution by a complex of vector bundles. Therefore the differential graded categories of bounded and perfect complexes are equivalent and so the perfect derived category~$\Dper_X$ is the bounded derived category of coherent sheaves over~$X$.
On argues as in the preceding section that the shift-invariant Hochschild cohomology of the differential graded category~$\Per_X$ of perfect complexes is isomorphic to the usual Hochschild cohomology of~$X$, so that we have an isomorphism~\hbox{$\HH_{\dg}^\bullet(\Per_X,\Sigma)\cong\HH^\bullet(X)$}. It will turn out that the behavior of the characteristic homomorphism depends on the genus~\hbox{$\rmg(X)\geqslant0$} of~$X$; it never is an isomorphism. This statement summarizes the following two results.

\begin{proposition}
If the genus~$\rmg(X)$ of the curve~$X$ is non-zero, then the characteristic homomorphism~\hbox{$\HH^2(X)\to\rmZ^2_{\gr}(\Dper_X,\Sigma)$} is not injective.
\end{proposition}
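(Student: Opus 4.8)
The plan is to run the characteristic spectral sequence from Theorem~\ref{thm:main_ss} for $\bfA=\Per_X$ with the shift automorphism $\Sigma$ and the standard bimodule, and to show that $\rmE_2^{0,2}$ is too small to account for all of $\rmZ^2_{\gr}(\Dper_X,\Sigma)$, so that the edge homomorphism~\eqref{eq:char} cannot be injective in total degree $2$. Concretely, by Theorem~\ref{thm:identification} the characteristic homomorphism $\HH^2(X)\to\rmZ^2_{\gr}(\Dper_X,\Sigma)$ factors as
\[
\HH^2(X)=\HH_{\dg}^2(\Per_X,\Sigma)\twoheadrightarrow\rmE_\infty^{0,2}\hookrightarrow\rmE_2^{0,2}=\HH_{\gr}^{0,2}(\Dper_X,\Sigma)=\rmZ^2_{\gr}(\Dper_X,\Sigma),
\]
so to prove non-injectivity it suffices to show that the total-degree-$2$ part of $\HH_{\dg}^2(\Per_X,\Sigma)$ is strictly larger than $\rmE_\infty^{0,2}$, equivalently that some nonzero class in $\HH^2(X)$ lies outside the image of the edge, i.e.\ is detected by $\rmE_\infty^{s,2-s}$ for some $s\geqslant 1$.

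First I would compute the two sides. On the source, $\HH^\bullet(X)$ for a smooth curve is the Hochschild cohomology of the bounded derived category of coherent sheaves; by the Hochschild--Kostant--Rosenberg decomposition (Swan, Yekutieli, or Caldararu in the scheme setting) one has $\HH^n(X)\cong\bigoplus_{p+q=n}\rmH^p(X,\wedge^q T_X)$, and since $X$ is a curve this gives $\HH^0(X)=\mathfrak{K}$, $\HH^1(X)=\rmH^0(X,T_X)\oplus\rmH^1(X,\calO_X)$, and $\HH^2(X)=\rmH^1(X,T_X)$, with everything vanishing above degree $2$. When $\rmg(X)\geqslant 1$ the space $\rmH^1(X,T_X)$ is the tangent space to the moduli of curves (dimension $3\rmg-3$ for $\rmg\geqslant 2$, dimension $1$ for an elliptic curve), so $\HH^2(X)\neq 0$. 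On the target, I would either invoke the analogue of Krause--Ye's computation of the graded center of $\Dper_X$, or — and this is the cleaner route — argue directly that the $\rmE_2^{0,2}$ term, being $\rmZ^2_{\gr}$ of the \emph{graded} homology category $\Dper_X$ with its shift, receives the characteristic class of \emph{central} second-order obstruction data only, and in the curve case the relevant piece $\HH_{\gr}^{0}(\Dper_X;\Sigma^2\Dper_X)$ is controlled by the (graded-)center of the abelian category of coherent sheaves, which on a curve is just $\rmH^0(X,\calO_X)=\mathfrak{K}$ in degree $0$ and forces the degree-$2$ part $\rmE_2^{0,2}$ to vanish (or at least to be a proper subspace). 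The key input is that multiplication by a class of $\rmH^1(X,T_X)$ shifts the cohomological degree in the HKR grading, so such a class cannot be a graded natural transformation $\Id\to\Sigma^2$ of the homology category; it genuinely lives further from the edge, in $\rmE_\infty^{1,1}$ coming from $\HH_{\gr}^1(\Dper_X;\Sigma^1\Dper_X)\supseteq\rmH^1(X,T_X)$-type contributions.

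Thus the argument reduces to exhibiting a nonzero class in $\HH^2(X)=\rmH^1(X,T_X)$ whose image under the edge projection to $\rmE_\infty^{0,2}\subseteq\rmZ^2_{\gr}(\Dper_X,\Sigma)$ is zero. Since for $\rmg(X)\geqslant 1$ we have $\dim_{\mathfrak K}\HH^2(X)=\dim_{\mathfrak K}\rmH^1(X,T_X)>0$ while $\rmE_2^{0,2}$ — being the degree-$2$ graded center of the homology category, which for a curve collapses to the center of the underlying abelian category contribution plus a one-dimensional "$\zeta$"-type piece in even degrees, as in the dual-numbers case — does not contain a subspace of the same dimension, a dimension count finishes the proof: the surjection $\HH^2(X)\twoheadrightarrow\rmE_\infty^{0,2}$ cannot be injective.

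The main obstacle I anticipate is pinning down $\rmE_2^{0,2}=\rmZ^2_{\gr}(\Dper_X,\Sigma)$ precisely enough: one must be careful that the graded center of $\Dper_X$ in degree $2$ is not accidentally large (it is not — on a curve the relevant natural transformations $\Id\to\Sigma^2$ are highly constrained, essentially because $\Hom_{\Dper_X}(\calF,\calF[2])=0$ for a sheaf $\calF$ on a curve, killing all "diagonal" contributions and leaving only the inseparable/trace-type classes), whereas $\HH^2(X)$ picks up the full $\rmH^1(X,T_X)$ through the off-diagonal HKR summand. I would make this comparison rigorous either by citing the HKR decomposition of $\HH^\bullet(X)$ together with an explicit identification of the edge map with the HKR projection onto the $\rmH^\bullet(X,\calO_X)$-column (so that $\rmE_\infty^{0,2}$ lands in $\rmH^0(X,\wedge^2 T_X)=0$ for a curve), or by reprising the Krause--Ye calculation; the former makes the non-injectivity transparent because then the entire group $\rmH^1(X,T_X)\neq 0$ maps to zero on the edge.
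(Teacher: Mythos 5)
Your proposal is correct and follows essentially the same route as the paper: identify $\HH^2(X)\cong\rmH^1(X;\calT_X)\neq0$ via the Hochschild--Kostant--Rosenberg decomposition, and observe that the target $\rmZ^2_{\gr}(\Dper_X,\Sigma)=\rmE_2^{0,2}$ vanishes because a smooth curve has global dimension~$1$, so coherent sheaves form a hereditary category and $\Ext^2(\mathcal{F},\mathcal{F})=0$. The only place to tighten is your hedge ``or at least to be a proper subspace'': in the genus-$1$ case the source is merely $1$--dimensional, so you must commit to the outright vanishing of the degree-$2$ graded center (which your hereditariness observation does give), rather than rely on a comparison of dimensions or on the $\zeta$--class analogy with the dual numbers, whose algebra is not hereditary.
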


\begin{proof}
Consider the group~$\rmZ^2(\Dper_X,\Sigma)$ on the~$\rmE^2$ page which is a target of the edge homomorphism. In the present case, it must be zero, because smoothness implies that the global dimension of~$X$ is~$1$, and so the category of coherent sheaves on~$X$ is hereditary.
On the other hand, the Hochschild--Kostant--Rosenberg decomposition
\[
\HH^k(X)\cong\bigoplus_{i+j=k}\rmH^i(X\,;\,\Lambda^j\calT_X)
\]
reduces to an isomorphism~$\HH^2(X)\cong\rmH^1(X\,;\,\calT_X)$ in the case of curves. If the genus is~$\rmg(X)=1$, then this is of dimension~$1>0$, and if the genus is~\hbox{$\rmg(X)\geqslant2$}, then this is of dimension~$3\rmg(X)-3>0$. It follows in all cases that the kernel of the edge homomorphism in dimension~$2$ is not zero.
\end{proof}

In other words, there is a non-trivial operation on the differential graded category of coherent complexes on~$X$ that is trivial from the point of view of the derived category. The above argument generalizes a statement of Buchweitz and Flenner \cite[Rem.~3.3.8]{Buchweitz+Flenner} on elliptic curves, who in turn refer to an unpublished preprint~\cite{Caldararu} of C\u{a}ld\u{a}raru. 

\begin{proposition}
If the genus~$\rmg(X)$ of the curve~$X$ is zero, then the characteristic homomorphism~\hbox{$\HH^1(X)\to\rmZ^1_{\gr}(\Dper_X,\Sigma)$} is not surjective.
\end{proposition}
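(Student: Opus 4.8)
The plan is to exhibit an explicit element of the graded center $\rmZ^1_{\gr}(\Dper_X,\Sigma)$ that cannot lie in the image of the characteristic homomorphism, for dimension reasons, by comparing the left-hand side $\HH^1(X)$ with the target on the $\rmE_2$ page. First I would compute $\HH^1(X)$ for $X=\bbP^1$ via the Hochschild--Kostant--Rosenberg decomposition, exactly as in the preceding proof: since $X$ is a smooth curve, $\HH^1(X)\cong\rmH^0(X;\calT_X)\oplus\rmH^1(X;\calO_X)$, and for genus zero the second summand vanishes while $\rmH^0(\bbP^1;\calT_X)$ is the three-dimensional space of global vector fields (the Lie algebra of $\mathrm{PGL}_2$). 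So $\dim\HH^1(X)=3$.

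Next I would compute the target $\rmZ^1_{\gr}(\Dper_X,\Sigma)$. Here one should use that $\Dper_{\bbP^1}$ has a tilting object, so it is equivalent to the derived category of the path algebra of the Kronecker quiver, or more directly that $\rmH_\bullet\bfA$ has a very small set of indecomposables and computable graded morphism spaces. A degree-one element of the graded center is a natural family $\Phi_E\in\Hom^1_{\Dper_X}(E,E)=\Ext^1_X(E,E)$ compatible with the shift; running over $E=\calO(n)$ and using $\Ext^1_X(\calO(n),\calO(m))\cong\rmH^1(\bbP^1;\calO(m-n))$, which is nonzero precisely when $m-n\le -2$, together with the naturality constraint \eqref{eq:natural} forced by the maps $\calO(n)\to\calO(m)$ for $m\ge n$, one finds that the graded center in degree $1$ is large — in particular of dimension strictly greater than $3$ (indeed it should again be infinite-dimensional, in the spirit of the degree-$0$ computation of Krause and Ye used in the previous section). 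I would state the relevant dimension count as a lemma and carry out the bookkeeping over the line bundles $\calO(n)$, $n\in\bbZ$.

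Granting these two computations, the conclusion is immediate: the edge homomorphism factors through the subquotient $\rmE_\infty^{0,1}\subseteq\rmE_2^{0,1}=\rmZ^1_{\gr}(\Dper_X,\Sigma)$ as in \eqref{eq:form_of_edge}, so its image has dimension at most $\dim\HH^1(X)=3$, which is strictly smaller than $\dim\rmZ^1_{\gr}(\Dper_X,\Sigma)$; hence the characteristic homomorphism is not surjective. Conceptually, the classes in the graded center that fail to lift are detected by a nonzero differential $\rmE_2^{0,1}\to\rmE_2^{2,0}$ (or a later one), and $\rmH^1(X)$ simply is not big enough to account for all natural degree-one operations on the derived category. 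The main obstacle will be the honest computation of $\rmZ^1_{\gr}(\Dper_{\bbP^1},\Sigma)$: one has to organize the $\Ext^1$ groups between the line bundles, pin down the naturality relations coming from all the nonzero maps among them, and verify that the resulting space of natural transformations genuinely exceeds dimension three — this is where care is needed, and where invoking the explicit Kronecker-quiver description, or the Krause--Ye style analysis, keeps the argument clean.
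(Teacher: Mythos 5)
Your overall strategy---compare $\dim\HH^1(\bbP^1)=3$ with the dimension of the degree-one graded center and conclude non-surjectivity from \eqref{eq:form_of_edge}---is exactly the paper's, and your HKR computation of the source is correct. But the heart of the matter, the lower bound on $\dim\rmZ^1_{\gr}(\Dper_{\bbP^1},\Sigma)$, is deferred to a lemma whose sketched proof would not work. A degree-one central element has components $\Phi_E\in\Ext^1_X(E,E)$, and for every line bundle $\Ext^1(\calO(n),\calO(n))\cong\rmH^1(\bbP^1;\calO)=0$; the off-diagonal groups $\Ext^1(\calO(n),\calO(m))$ enter only through the naturality constraints, not as homes for the components themselves. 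So the proposed ``bookkeeping over the line bundles'' yields exactly zero, not a space of dimension greater than three. Moreover, the graded center requires a component at \emph{every} object, and since cones are not functorial you cannot in general reduce its computation to a tilting object or any other generating family---that non-locality is precisely why the graded center can dwarf Hochschild cohomology here.

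The classes that make $\rmZ^1_{\gr}$ large live on the torsion sheaves: for each closed point $x\in\bbP^1$ one has $\Ext^1(\calO_x,\calO_x)\cong\mathfrak{K}$, and since there are no nonzero morphisms between skyscrapers supported at distinct points, naturality imposes no relations among these choices, so one gets (at least) a product $\prod_{x}\mathfrak{K}$, which is infinite-dimensional. This is exactly what the paper extracts from Krause and Ye: under Beilinson's derived equivalence with the Kronecker algebra, only the regular Auslander--Reiten components---indexed by the closed points of $\bbP^1$, with the translation $\tau$ acting as the identity---contribute to the graded center, and their Lemma~4.9 and Proposition~4.10 then give $\dim\rmZ^1_{\gr}(\Dper_{\bbP^1},\Sigma)=\infty$. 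You mention this route as a fallback, which would indeed close the argument, but as written the proposal neither carries out that analysis nor supplies a correct elementary substitute, so the crucial inequality $\dim\rmZ^1_{\gr}(\Dper_{\bbP^1},\Sigma)>3$ remains unproven.
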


\begin{proof}
Clearly, the algebraic curve~$X$ in question is the projective line~$\bbP^1$. In this case the Hochschild--Kostant--Rosenberg decomposition gives
\[
\dim\HH^t(X)=
\begin{cases}
1 & t=0\\
3 & t=1\\
0 & \text{otherwise}.
\end{cases}
\]

On the other hand, here is a description of the graded center~$\rmZ^\bullet(\Dper_X,\Sigma)$: By Beilinson~\cite{Beilinson}, the category of coherent sheaves on~\hbox{$X=\bbP^1$} is derived equivalent to the category of finite-dimensional representations of an algebra~$A$, namely the Kronecker algebra. This algebra is finite-dimensional. It does not have finite presentation type, but it is tame. The Auslander--Reiten quiver of~$A$ has three types of components: the pre-projective, the pre-injective, and the regular parts. The latter are indexed by the closed points of~$X=\bbP^1$, and they are the only ones that contribute to the graded center by~\cite[Lem.~4.9]{Krause+Ye}. The Auslander--Reiten translation~$\tau$ is the identity on them, so that the situation is~$1$--periodic. We can then use another result of Krause and Ye~\cite[Prop.~4.10]{Krause+Ye} to infer
\[
\dim\rmZ^t(\Dper_X,\Sigma)=
\dim\rmZ^t(\Dper_A,\Sigma)=
\begin{cases}
1 & t=0\\
\infty & t=1\\
0 & \text{otherwise},
\end{cases}
\]
and the edge homomorphism cannot be surjective.
\end{proof}

%%%

\section{Free loop spaces and string topology}\label{sec:top} 

In this final section, we will discuss topological examples from the point of view of the spectral sequence in Theorem~\ref{thm:main_ss}. It will turn out that the edge homomorphism~\eqref{eq:char} is rarely injective, but surjective. To put this into context, note that we consider here the special case of differential graded categories~$\bfA$ that have only one object, as in Corollary~\ref{cor:dgass}, and the endomorphism differential graded algebra~$A$ of that object will be the differential graded algebra~$\rmC X$ of cochains~(with coefficients in a ground field~$\mathfrak{K}$) over a topological space~$X$. In particular the omnipresent automorphism is trivial, given by parity. Our grading conventions~$\rmC^p X=(\rmC X)_{-p}$ force these graded algebras to be supported in negative degrees. The characteristic spectral sequence in Theorem~\ref{thm:main_ss} takes the form
\begin{equation}\label{eq:Moore}
\HH^\bullet(\rmH^\bullet X)\Longrightarrow\HH^\bullet(\rmC X)
\end{equation}
in this case. This spectral sequence, also referred to as the {\it Moore spectral sequence}, was constructed directly by Kuribayashi~\cite[Theo.~3.1]{Kuribayashi} in the special case where~$X$ is a connected space with finite type singular cohomology. If~$X$ is a simply-connected space, Cohen and Jones~\cite[Cor.~9]{Cohen+Jones} (see also~\cite{Cohen}) have identified the target with the homology of the free loop space~$\Lambda X$ of~$X$. If~$X=M$ is, in addition, a closed oriented manifold, then this is the string homology of~$M$, and in this special case, a spectral sequence such as~\eqref{eq:Moore} has also been constructed by Felix, Thomas, and Vigu\'e-Poirrier~\cite[3.3~Prop.]{Felixetal}. Our algebraic approach makes it clear that no hypothesis on~$X$ is necessary to obtain this Moore spectral sequence. The edge homomorphisms of the Moore spectral sequence~\eqref{eq:Moore} takes the following form:~\hbox{$\HH^t(\rmC X)\to\rmH^t(X)$}. Note that the target is indeed the entire cohomology ring~$\rmH^\bullet X$: Since it is commutative, it agrees with~$\rmZ^t_{\gr}(\rmH^\bullet X)$.

%%%

\begin{figure}%[H]
\caption{The spectral sequence~$\HH^\bullet(\rmH^\bullet\rmS^3)\Rightarrow\HH^\bullet(\rmC\rmS^3)$}
\label{fig:S3}
\begin{center}
\begin{tikzpicture}[xscale=1,yscale=.4]
;
\draw[->] (-2,1) -- (4.5,1) node [right=.25] {$t-s$};
\draw (0,1) node [above=.25]{$0$};
\draw (1,1) node [above=.25]{$1$};
\draw (2,1) node [above=.25]{$2$};
\draw (3,1) node [above=.25]{$3$};
\draw (4,1) node [above=.25]{$4$};

\draw[->] (-.5,-13) -- (-.5,4) node [above=.25]{$s$};

\foreach \x in {0,...,4}
	\foreach \y in {-12,...,0}
		\draw (\x,\y) node {$\cdot$};

\draw (-1.25,0) node {$0$};
\draw (0, 0) node {$\bullet$};

\draw (-1.25,-3) node {$-3$};
\draw (0, -3) node {$\bullet$};
\draw (1, -3) node {$\bullet$};

\draw (-1.25,-6) node {$-6$};
\draw (1, -6) node {$\bullet$};
\draw (2, -6) node {$\bullet$};

\draw (-1.25,-9) node {$-9$};
\draw (2, -9) node {$\bullet$};
\draw (3, -9) node {$\bullet$};

\draw (-1.25,-12) node {$-12$};
\draw (3, -12) node {$\bullet$};
\draw (4, -12) node {$\bullet$};

\end{tikzpicture}
\end{center}
\end{figure}

\begin{example}\label{ex:S3}
Figure~\ref{fig:S3} shows part of the Moore spectral sequence~\eqref{eq:Moore} for the~$3$--dimensional sphere~\hbox{$X\simeq\rmS^3$}. The vector spaces
\[
\HH^\bullet(\rmH^\bullet\rmS^3)\cong\rmH^\bullet\rmS^3\otimes\Ext_{\rmH^\bullet\rmS^3}^{\bullet}(\mathfrak{K},\mathfrak{K})
\]
on the~$\rmE_2$ page are displayed as follows. A bullet indicates a~$1$--dimensional vector space; a dot indicates that the vector space is trivial. The more general case when~$X\simeq\rmS^n$ for an odd integer~\hbox{$n\geqslant5$} is similar. Compare with Kuribayahsi's computation~\cite[Prop.~2.4]{Kuribayashi}. Sparseness implies that these spectral sequences degenerate. We see that the characteristic edge homomorphisms are surjective, but they have a non-trivial kernel. 
\end{example}

%%%

The following Proposition~\ref{prop:formal} demonstrates that the behavior in the preceding Example~\ref{ex:S3} is typical. Recall that a topological space is {\em formal}~(over~$\mathfrak{K}$) if there are homomorphisms~\hbox{$\rmC X\leftarrow A \rightarrow \rmH^\bullet(X)$} of differential graded algebras inducing isomorphisms in cohomology. Here~$\rmH^\bullet(X)$ is the differential graded algebra with trivial differential~$\delta=0$.
See~\cite{Deligneetal},~\cite{Sullivan}, or~\cite{FOT} for many interesting geometric and topological examples of formality.

%\note{It seems to me that the final remark (in this revised version) makes much of the following superfluous.}
%\note{\bf All of what was said above including the next proposition is fine for any field, while the examples below  and the last proposition using Hodge decomposition only works in characteristic zero. I suggest to leave these examples for illustration as the referee liked them?}

\begin{proposition}\label{prop:formal}
Let~$X$ be a formal space. Then the characteristic edge homomorphism~$\HH^t(\rmC X)\to\rmH^t(X)$ is an epimorphism for all~$t$.
\end{proposition}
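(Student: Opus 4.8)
The plan is to exploit formality to replace the cochain algebra~$\rmC X$ by its cohomology ring, which carries the zero differential, and to observe that for such a differential graded algebra the characteristic spectral sequence of Corollary~\ref{cor:dgass} degenerates for trivial reasons; Proposition~\ref{prop:surjective} then yields the claim.

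Two auxiliary facts are needed. First, the characteristic spectral sequence is functorial for quasi-isomorphisms of differential graded algebras: a quasi-isomorphism~$f\colon B\to C$ induces an isomorphism of the two spectral sequences from the~$\rmE_1$ page onward, because~$\rmE_1$ depends only on the homologies~$\rmH_\bullet$ of the morphism complexes and~$\rmH_\bullet(f)$ is an isomorphism; hence it induces an isomorphism of the abutments~$\HH_{\dg}^\bullet$ carrying the edge homomorphism to the edge homomorphism. This rests on the well-known invariance of Hochschild cohomology under quasi-isomorphisms, see Loday~\cite[5.3.5]{Loday}, realized at the level of the double complex~\eqref{eq:double} in a manner compatible with its column filtration. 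Second, if the differential of a differential graded algebra vanishes, then the internal differential of~\eqref{eq:double} vanishes, so the total differential is the Hochschild differential alone; consequently~$\rmE_1=\rmE_0$, the~$\rmE_2$ page is the graded Hochschild cohomology, and~$\rmE_2=\rmE_\infty$ already accounts for all of~$\HH_{\dg}^\bullet$. In particular such a spectral sequence degenerates at~$\rmE_2$.

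Now assemble the argument. Since~$X$ is formal, there is a zig-zag~$\rmC X\leftarrow B\rightarrow\rmH^\bullet X$ of differential graded algebras, with both maps quasi-isomorphisms and with~$\rmH^\bullet X$ carrying the zero differential. By the first fact, the characteristic spectral sequence of~$\rmC X$ is isomorphic, compatibly with edge homomorphisms, to that of~$\rmH^\bullet X$. By the second fact, the latter degenerates at~$\rmE_2$, so Proposition~\ref{prop:surjective} shows that the characteristic homomorphism~$\HH_{\dg}^t(\rmH^\bullet X)\to\rmZ_{\gr}^t(\rmH^\bullet X)$ is an epimorphism for all~$t$. Finally, as the cohomology ring~$\rmH^\bullet X$ is graded-commutative, its graded center is all of it, so~$\rmZ_{\gr}^t(\rmH^\bullet X)\cong\rmH^t X$, and transporting back along the zig-zag identifies this epimorphism with the characteristic edge homomorphism~$\HH^t(\rmC X)\to\rmH^t(X)$.

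The main obstacle is the first auxiliary fact: one must check that the invariance of Hochschild cohomology under quasi-isomorphisms lifts to a genuine morphism of characteristic spectral sequences respecting the filtration, equivalently that the characteristic homomorphism~$\HH_{\dg}^t(\bfA)\to\rmZ_{\gr}^t(\rmH_\bullet\bfA)$ depends only on the quasi-isomorphism type of~$\bfA$. This can be arranged by working with the bar resolution and the standard comparison maps, or by passing to derived categories of bimodules; once it is in place the remainder of the proof is formal.
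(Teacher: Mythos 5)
Your proof is correct and follows the same basic strategy as the paper: use formality to pass from~$\rmC X$ to the cohomology ring with zero differential, invoke invariance of Hochschild cohomology under quasi-isomorphisms, deduce degeneration of the Moore spectral sequence, and conclude via the edge homomorphism (your appeal to Proposition~\ref{prop:surjective} and to graded-commutativity of~$\rmH^\bullet X$ matches the paper exactly). The one genuine difference is how degeneration is obtained. The paper argues: since~$\HH^\bullet(\rmC X)\cong\HH^\bullet(\rmH^\bullet X)$ as an abstract isomorphism of the abutment with (the total of) the~$\rmE_2$ page, the spectral sequence must degenerate --- an inference that implicitly rests on a dimension count. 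You instead transport degeneration along a zig-zag of morphisms of filtered double complexes that are isomorphisms from~$\rmE_1$ onward, which is cleaner and avoids any finiteness hypothesis. Your worry about the ``first auxiliary fact'' is easily resolved and does not require bar resolutions: a map of differential graded algebras~$f\colon B\to C$ gives maps of double complexes~\hbox{$\rmE_0(C\,;\,C)\to\rmE_0(B\,;\,f^*C)\leftarrow\rmE_0(B\,;\,B)$}, both compatible with the column filtration, and since we work over a field both are isomorphisms on~$\rmE_1$ (they are induced by~$\rmH_\bullet f$ on source and target of the~$\Hom$'s respectively); the edge homomorphisms are carried to one another because on~$\rmE_2^{0,t}$ these maps realize the identification of graded centers under~$\rmH_\bullet f$. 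So your argument goes through and in fact sharpens the step the paper treats most briefly.
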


\begin{proof}
If~$X$ is a formal space, then we know that there exists a sequence of quasi-isomorphisms connecting the differential graded cochain algebra~$\rmC X$ with the discrete cohomology algebra~$\rmH^\bullet X$. From the invariance of Hochschild cohomology under quasi-isomorphisms of differential graded algebras~(as proved in~\cite[5.3.5]{Loday} and~\cite[Sec.~3]{FMT}) it follows that there exists an isomorphism~\hbox{$\HH^\bullet(\rmC X)\cong \HH^\bullet(\rmH^\bullet X)$} of algebras. Therefore the Moore spectral sequence~\eqref{eq:Moore} degenerates and the characteristic edge homomorphism
\[
\HH^t(\rmC X)\twoheadrightarrow\rmE_\infty^{0,t}\cong\dots\cong\rmE_2^{0,t}=\HH^{0,t}(\rmH^\bullet X)\cong\rmZ^t_{\gr}(\rmH^\bullet X).
\]
is an epimorphism for all~$t$.
\end{proof}

\begin{example}\label{ex:Heisenberg}
Let~$\mathfrak{K}$ be a field of characteristic~$0$. Let us consider an example of a manifold that is not formal~(over the field~$\mathfrak{K}$) and that is not even simply-connected, namely the~$3$--dimensional Heisenberg manifold~$X$. (See~\cite[Ex.~3.17]{FOT} for an introduction.) The Heisenberg manifold is actually an aspherical manifold, namely we have an equivalence~\hbox{$X\simeq\rmB G$}, where~$\rmB G$ is the classifying space of the integral Heisenberg group~$G$ of unit triangular~$(3,3)$--matrices. The cohomology algebra is~$6$--dimensional, namely~\hbox{$\rmH^\bullet X\cong(\Lambda(a_1,a_2)\otimes\Lambda(b_1,b_2))/(a_1a_2=b_1b_2)$}, where the subscripts indicate the degree. Since~$G$ is nilpotent but not abelian, we know from general principles that~$X$ cannot be formal~\cite{Hasegawa}. This can also be seen directly: The Massey product~$\langle a_1,a_1,b_1\rangle$ is~$a_2\not=0$. The Hochschild cohomology~$\HH^\bullet(\rmH^\bullet X)$ contains~$\HH^0(\rmH^\bullet X)=\rmH^\bullet X$, since the ordinary cohomology algebra of a space is always commutative. This is the zeroth column of the Moore spectral sequence. As for the target~$\HH^\bullet(\rmC X)$ of that spectral sequence, note that the cochain algebra~$\rmC X\simeq \rmC\rmB G$ has finite-dimensional homology. It is the Koszul dual of the group algebra~$\mathfrak{K}[G]$ over the ground field~$\mathfrak{K}$: Since the Koszul dual of any differential graded algebra is the linear dual of the bar construction, we get~$\mathfrak{K}[G]^!=\rmC\rmB G$ in our case. It follows from the invariance of Hochschild cohomology under Koszul duality~\cite[Thm.~4.1]{Hu} that there exists an isomorphism~\hbox{$\HH^\bullet(\rmC X)\cong\HH^\bullet(\mathfrak{K}[G])$}, and the right hand side can be computed as the product of the group cohomologies of the centralizers of the elements in~$G$. The centralizer of the unit is~$G$ itself, so that~$\rmH^\bullet\rmB G\cong\rmH^\bullet X$ also appears in the target of the spectral sequence, and the edge homomorphism is surjective.
\end{example}

More generally, we have the following result.

\begin{proposition}\label{prop:Hodge} 
Let~$\mathfrak{K}$ be a field of characteristic~$0$ and~$X$ be any topological space. 
Then the characteristic edge homomorphism~$\HH^t(\rmC X)\to\rmH^t(X)$ is surjective.
\end{proposition}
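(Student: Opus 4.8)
The plan is to reduce the statement to the surjectivity of the characteristic edge homomorphism for the cochain algebra $\rmC X$ with trivial differential coefficients, and to exploit the fact that over a field of characteristic zero every cochain algebra is \emph{intrinsically formal as an $E_\infty$-algebra} — but more usefully here, that the rational (or characteristic-zero) cochains admit a commutative model. Concretely, I would first invoke Sullivan's theory: over $\mathfrak{K}$ of characteristic $0$, there is a commutative differential graded algebra $A_{\mathrm{PL}}(X)$ quasi-isomorphic, as an $E_\infty$-algebra, to $\rmC X$. Since Hochschild cohomology is invariant under quasi-isomorphisms of differential graded algebras (the references \cite[5.3.5]{Loday} and \cite[Sec.~3]{FMT} already cited), we may replace $\rmC X$ by a \emph{strictly commutative} differential graded algebra $B$ with $\rmH^\bullet B \cong \rmH^\bullet X$. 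The point of commutativity is that for a commutative differential graded algebra $B$, the multiplication map $B \to \rmZ_{\gr}^\bullet(B)$ into the (differential graded) graded center is a splitting of the inclusion — every element of $B$ is central — so $B$ itself sits inside $\HH^0_{\dg}(B)$-level data.

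Next I would trace this through the characteristic spectral sequence of Corollary~\ref{cor:dgass} for $B$. The $\rmE_2$ page is $\HH_{\gr}^s(\rmH_\bullet B\,;\,\Sigma^t\rmH_\bullet B)$, and the edge map is $\HH^t_{\dg}(B)\to\rmZ^t_{\gr}(\rmH^\bullet X)=\rmE_2^{0,t}$. To prove surjectivity of this edge map, it suffices to show that every class on the $0$-column $\rmE_2^{0,t}$ is a permanent cycle, i.e.\ that all the differentials $d_r\colon\rmE_r^{0,t}\to\rmE_r^{r,t-r+1}$ originating on the edge vanish. Here is where commutativity of $B$ does the work: the bottom edge $\rmZ^\bullet_{\gr}(\rmH^\bullet X)=\rmH^\bullet X$ is, by commutativity of $B$, already realized by actual cocycles in the Hochschild complex $\CH_{\dg}(B)$ — namely, a cohomology class $[z]\in\rmH^t X$ is represented by a cocycle $z\in B$, which, viewed as a degree-$t$ element of $B=\CH^0_{\dg}(B)$, is a Hochschild cocycle precisely because $z$ is central and $\delta$-closed. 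This element maps to $[z]$ under the edge homomorphism. Hence the edge homomorphism is surjective, as required; and this argument needs no hypothesis on $X$ beyond the existence of a commutative model, which characteristic $0$ supplies.

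I would phrase the final proof roughly as: choose a commutative differential graded algebra $B$ with a zig-zag of quasi-isomorphisms to $\rmC X$; invoke quasi-isomorphism invariance to get $\HH^\bullet_{\dg}(\rmC X)\cong\HH^\bullet_{\dg}(B)$ compatibly with the edge homomorphisms and with the identification of the targets $\rmZ^\bullet_{\gr}(\rmH^\bullet X)$; then observe that the inclusion of $B$ in degree $0$ of the Hochschild cochain complex sends $\delta$-cocycles to Hochschild cocycles (using that $B$ is commutative so the $s=1$ Hochschild differential $b\mapsto (f\mapsto fb-bf)$ vanishes on all of $B$), and that on cohomology this realizes every class of $\rmH^\bullet X=\rmZ^\bullet_{\gr}(\rmH^\bullet X)$; conclude surjectivity. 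This recovers Proposition~\ref{prop:formal} as the special case where the model can be taken to be $\rmH^\bullet X$ itself, and covers Example~\ref{ex:Heisenberg} without the Koszul-duality detour.

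The main obstacle, and the step I would write most carefully, is the compatibility bookkeeping: ensuring that the chosen zig-zag of quasi-isomorphisms $\rmC X \leftarrow \cdot \rightarrow B$ induces not merely an abstract isomorphism $\HH^\bullet_{\dg}(\rmC X)\cong\HH^\bullet_{\dg}(B)$ but one commuting with the respective characteristic edge homomorphisms into the common target $\rmZ^\bullet_{\gr}(\rmH^\bullet X)$ — i.e.\ that the edge homomorphism is itself natural under quasi-isomorphisms of differential graded algebras, since on $\rmE_2$-pages it is induced by functoriality of $\HH_{\gr}^\bullet(\rmH_\bullet(-))$ in the homology algebra, which is unchanged along a quasi-isomorphism. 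Once this naturality is in place the rest is formal. A secondary point worth a sentence is that a commutative model over $\mathfrak{K}$ exists for \emph{every} space, with no finiteness or nilpotence assumption, which is exactly why the proposition holds in this generality; one may cite Sullivan~\cite{Sullivan} (or the exposition in~\cite{FOT}) for the polynomial-de-Rham functor $A_{\mathrm{PL}}$ and its $E_\infty$-equivalence with singular cochains over a field of characteristic $0$.
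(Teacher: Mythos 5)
Your proposal is correct, and it shares the paper's key reduction --- in characteristic~$0$, replace~$\rmC X$ by a quasi-isomorphic strictly graded-commutative differential graded algebra and transport the problem along the resulting isomorphism of Hochschild cohomologies --- but it finishes differently. The paper concludes by invoking the Hodge decomposition of Hochschild cohomology of commutative algebras in characteristic~$0$ (Loday, Weibel), in which~$\rmH^\bullet(X)$ appears as the zeroth summand and therefore must survive the spectral sequence. You instead argue at the cochain level: for a graded-commutative~$B$, the first Hochschild differential~$z\mapsto\bigl(f\mapsto fz-(-1)^{|f||z|}zf\bigr)$ vanishes identically (note the Koszul sign, which you omit but which graded commutativity kills anyway), so any~$\delta$-cocycle~$z\in B=\CH^0_{\dg}(B)$ is already a cocycle for the total differential and represents a class of~$\HH^t_{\dg}(B)$ mapping to~$[z]\in\rmH^t X=\rmZ^t_{\gr}(\rmH^\bullet X)$ under the edge homomorphism. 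Your route is more elementary and arguably more self-contained: the Hodge decomposition as cited is stated for ordinary commutative algebras, and applying it to a commutative differential graded algebra requires at least a remark, whereas your permanent-cycle argument needs nothing beyond Proposition~\ref{prop:identification_1} and the shape of the edge map~\eqref{eq:form_of_edge}. The paper's route buys more in exchange: the full Hodge decomposition of the target, not just the surjectivity of the edge. The one point you rightly flag --- that the quasi-isomorphism invariance of~$\HH_{\dg}$ is compatible with the characteristic edge homomorphisms into the common target~$\rmZ^\bullet_{\gr}(\rmH^\bullet X)$ --- is also used silently by the paper (both here and in Proposition~\ref{prop:formal}), so it is not a gap relative to the paper's own standard of rigor, but you are right that it deserves the sentence you propose to give it.
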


\begin{proof}
In characteristic~$0$, the~$\rmE_\infty$ differential graded algebra~$\rmC X$ of cochains on the space~$X$ can be replaced by a quasi-isomorphic differential graded algebra~$C$ that is~(strictly) commutative~(in the graded sense), so that we have isomorphisms~$\HH^t(C)\cong\HH^t(\rmC X)$ and~$\rmH^\bullet(C)\cong\rmH^\bullet(\rmC X)\cong\rmH^\bullet(X)$. We can then use the Hodge decomposition~(Loday~\cite[Sec.~5.4.8]{Loday} or Weibel~\cite[Sec.~9.4.3]{Weibel}) for the Hochschild cohomology of commutative algebras in characteristic~$0$, where~$\rmH^\bullet(X)$ appears as the~$0$--th summand, to see that the cohomology has to survive the spectral sequence.
\end{proof}

%\begin{proof}
%If~$\ch(\mathfrak{K})=0$, then there exists a Hodge decomposition 
%\[
%\HH^t(\rmC X)\cong\prod_{i\geqslant 0} \HH_{(i)}^t(\rm C X),
%\]
%for the Hochschild cohomology of the differential graded algebra~$\rmC X$ of cochains, and we also have~$\HH_{(0)}^t(\rmC X)\cong\rmH^t(X)$. See Loday~\cite[Sec.~5.4.8]{Loday} (or Weibel~\cite[Sec.~9.4.3]{Weibel}) for the case of a commutative differential graded algebra, and Ginot~\cite[Prop.~5.6]{Ginot} for a generalization that accommodates the cochain algebras which are only~$\rmE_\infty$. The decomposition immediately implies the surjectivity of the characteristic edge homomorphism.
%\end{proof}

Since the first version of this paper appeared, Briggs and G\'elinas published a preprint~\cite{Briggs+Gelinas} where they discuss the characteristic homomorphism in the~$\rmA_\infty$ setting, and the full~$\rmE_\infty$ structure might be useful to extend the above result to positive characteristics.

%%%

\section*{Acknowledgments}

The first author thanks the Department of Mathematical Sciences of NTNU for the wonderful hospitality and financial support. He likes to thank Teimuraz Pirashvili and Nicole Snashall for many inspiring conversations. The second author has been supported by the Danish National Research Foundation through the Centre for Symmetry and Deformation (DNRF92). He thanks Petter Bergh, Reiner Hermann, and \O yvind Solberg. Both authors thank Benjamin Briggs, Vincent G\'elinas, and the referee for their valuable comments and suggestions.

%%%

%%%

%\vfill
\parbox{\linewidth}{%
Frank Neumann,
Department of Mathematics, 
University of Leicester,
Leicester LE1 7RH,
UNITED KINGDOM\phantom{ }\\
\href{mailto:fn8@le.ac.uk}{fn8@le.ac.uk}}

\vspace{\baselineskip}

\parbox{\linewidth}{%
Markus Szymik,
Department of Mathematical Sciences,
NTNU Norwegian University of Science and Technology,
7491 Trondheim,
NORWAY\phantom{ }\\
\href{mailto:markus.szymik@ntnu.no}{markus.szymik@ntnu.no}}

\end{document}